\theoremstyle{plain}
\newtheorem{thm}{Theorem}[section]
\newtheorem{lemma}[thm]{Lemma}
\newtheorem{cor}[thm]{Corollary}
\newtheorem{prop}[thm]{Proposition}
\theoremstyle{definition}
\newtheorem{remark}[thm]{Remark}
\newtheorem{example}[thm]{Example}
\newtheorem{problem}[thm]{Problem}
\theoremstyle{remark}
\newtheorem*{condition1}{\bf Condition~(C1)}
\newtheorem*{condition2}{\bf Condition~(C2)}
\newcommand\mstrut{{\phantom{.}}}
\newcommand\bull{{\hbox{\bf .}}}
\newcommand\newdot{{\kern.8pt\cdot\kern.8pt}}
\font\sevenrm=cmr7
\newcommand\E{\mathbb{E}}
\renewcommand{\H}{{\mathbb H}}
\newcommand\R{\mathbb{R}}
\newcommand\1{\hbox{\kern.375em\vrule height1.57ex depth-.1ex width.05em\kern-.375em 1}}
\newcommand\SA{\mathscr A}
\newcommand\SF{\mathscr F}
\def\acong{\mathrel{\mathpalette\@avereq\sim}} 
\def\@avereq#1#2{\lower.5\p@\vbox{\baselineskip\z@skip\lineskip-.5\p@
  \ialign{$\m@th#1\hfil##\hfil$\crcr#2\crcr\longrightarrow\crcr}}}
\def\mequal{\mathrel{\mathpalette\@mvereq{\hbox{\sevenrm m}}}} 
\def\@mvereq#1#2{\lower.5\p@\vbox{\baselineskip\z@skip\lineskip1.5\p@
    \ialign{$\m@th#1\hfil##\hfil$\crcr#2\crcr=\crcr}}}
\def\tr#1#2{/\!/_{\!#1,#2}^{\phantom{.}}} 
\def\itr#1#2{/\!/_{\!#1,#2}^{-1}}  
\def\map#1#2#3{{#1}\colon\,{#2}\to{#3}}
\def\nbull{{\raise1.5pt\hbox{\bf .}}}
\def\tbull{{\raise1.0pt\hbox{\bf .}}}
\def\nab#1{\nabla_{\!#1}^\mstrut}
\def\lambdamin{\lambda_{\hbox{\sevenrm min}}}
\newbox\ovlbox
\def\ovl#1{\setbox\ovlbox\hbox{$#1$}\rlap{\kern.5\wd\ovlbox\kern-1.5pt
  $\overline{\hbox to4pt{\hss$\phantom{#1}$\hss}}$\hss}#1}
\def\unl#1{\setbox\ovlbox\hbox{$#1$}\rlap{\kern.5\wd\ovlbox\kern-2.5pt
  $\underline{\hbox to4pt{\hss$\phantom{#1}$\hss}}$\hss}#1}
\def\mathpal#1{\mathop{\mathchoice{\text{\rm #1}}%
   {\text{\rm #1}}{\text{\rm #1}}%
   {\text{\rm #1}}}\nolimits}
\def\id{\mathpal{id}}
\def\vol{\mathpal{vol}}
\begin{document}

\title [The differentiation of hypoelliptic diffusion semigroups] {The
  differentiation of hypoelliptic diffusion semigroups}

\author[M. Arnaudon]{Marc Arnaudon} \address{ Laboratoire de Math\'ematiques
  et Applications, CNRS: UMR6086\hfill\break\indent Universit\'e de Poitiers,
  T\'el\'eport 2 -- BP 30179\hfill\break\indent F--86962 Futuroscope
  Chasseneuil, France} \email{marc.arnaudon@math.univ-poitiers.fr}

\author[A. Thalmaier]{Anton Thalmaier} \address{ Unit\'e de Recherche en
  Math\'ematiques, FSTC\hfill\break\indent Universit\'e du
  Luxembourg\hfill\break\indent 6, rue Richard
  Coudenhove-Kalergi\hfill\break\indent L--1359 Luxembourg, Grand-Duchy of
  Luxembourg} \email{anton.thalmaier@uni.lu} \date{\today\ \emph{ File:
  }\jobname.tex}

%
%
\begin{abstract}\noindent
  Basic derivative formulas are presented for hypoelliptic heat semigroups and
  harmonic functions extending earlier work in the elliptic case.  Following
  the approach of~\cite{TH 97}, emphasis is placed on developing integration
  by parts formulas at the level of local martingales.  Combined with the
  optional sampling theorem, this turns out to be an efficient way of dealing
  with boundary conditions, as well as with finite lifetime of the underlying
  diffusion.  Our formulas require hypoellipticity of the diffusion in the
  sense of Malliavin calculus (integrability of the inverse Malliavin
  covariance) and are formulated in terms of the derivative flow, the
  Malliavin covariance and its inverse.
  Finally some extensions to the nonlinear setting of harmonic mappings are
  discussed.
\end{abstract}

\maketitle
\tableofcontents

\noindent\keywords{{\em Keywords}: Diffusion semigroup, hypoelliptic operator,
  integration by parts,\hfill\break Malliavin calculus, Malliavin covariance}

\smallskip\noindent \subjclass{AMS 1991 Subject classification: Primary 58G32,
  60H30; Secondary 60H10}
%
%
\section{Introduction}\label{Sect1}\noindent
Let $M$ be a smooth $n$-dimensional manifold.  On $M$ consider a globally
defined Stratonovich SDE of the type
\begin{equation}
  \label{eq:StratonovichSDE}
  \delta X=A(X)\,\delta Z+A_0(X)\,dt
\end{equation}
with $A_0\in\Gamma(TM)$, $A\in\Gamma(\R^r\otimes TM)$ for some $r$, and $Z$ an
$\R^r$-valued Brownian motion on some filtered probability space satisfying
the usual completeness conditions.  Here $\Gamma(TM)$, resp.\
$\Gamma(\R^r\otimes TM)$, denote the smooth sections over $M$ of the tangent
bundle $TM$, resp.\ the vector bundle $\R^r\otimes TM$.

Solutions to \eqref{eq:StratonovichSDE} are diffusions with generator given in
H\"ormander form as
\begin{equation}
  \label{eq:Generator}
  L=A_0+{1\over 2}\sum\limits_{i=1}^r A_i^2
\end{equation}
where $A_i=A(\,\nbull\,)e_i\in\Gamma(TM)$ and $e_i$ the $i$th standard unit
vector in~$\R^r$.

There is a partial flow $X_t(\,\nbull\,)$, $\zeta(\,\nbull\,)$ to
\eqref{eq:StratonovichSDE} such that for each $x\in M$ the process $X_t(x)$,
$0\le t<\zeta(x)$, is the maximal strong solution to
\eqref{eq:StratonovichSDE} with starting point $X_0(x)=x$ and explosion
time~$\zeta(x)$. Adopting the notation $X_t(x,\omega)=X_t(x)(\omega)$, resp.\
$\zeta(x,\omega)=\zeta(x)(\omega)$ and
\begin{equation*}
  M_t(\omega)=\{x\in M\colon\ t<\zeta(x,\omega)\},
\end{equation*} 
it further means that there exists a set $\Omega_0\subset\Omega$ of full
measure such that for all $\omega\in\Omega_0$ the following conditions hold:

\begin{itemize}
\item[(i)] $M_t(\omega)$ is open in $M$ for $t\geq0$, i.e.\
  $\zeta(\,\nbull\,,\omega)$ is lower semicontinuous on $M$.
\item[(ii)] $\map{X_t(\,\nbull\,,\omega)}{M_t(\omega)}M$ is a diffeomorphism
  onto an open subset $R_t(\omega)$ of $M$.
\item[(iii)] For $t>0$ the map $s\mapsto X_s(\,\nbull\,,\omega)$ is continuous
  from $[0,t]$ to $C^\infty\bigl(M_t(\omega),M\bigr)$ when the latter is
  equipped with the $C^\infty$-topology.
\end{itemize}

\noindent Thus, the differential $\map{T_xX_t}{T_xM}{T_{X_t}M}$ of the map
$\map{X_t}{M_t}{M}$ is well-defined at each point $x\in M_t$, for all
$\omega\in\Omega_0$.  We also write $X_{t\ast}$ for $TX_t$.

Let
\begin{equation}
  \label{eq:SemiGroupMin}
  (P_tf)(x)=\E\bigl[\bigl(f\circ X_t(x)\bigr)\,1_{\{t<\zeta(x)\}}\bigr]
\end{equation}
be the minimal semigroup associated to \eqref{eq:StratonovichSDE}, acting on
bounded measurable functions $\map fM\R$.

Let $\hbox{Lie}\bigl(A_0,A_1,\dots,A_r\bigr)$ denote the Lie algebra generated
by $A_0,\dots,A_r$, i.e., the smallest $\R$-vector space of vector fields on
$M$ containing $A_0,\dots,A_r$ and being closed under Lie brackets.  We
suppose that \eqref{eq:Generator} is non-degenerate in the sense that the
ideal generated by $(A_1,\dots,A_r)$ in
$\hbox{Lie}\bigl(A_0,A_1,\dots,A_r\bigr)$ is the full tangent space at each
point $x\in M$:
$$
\hbox{Lie}\bigl(A_i,\,[A_0,A_i]\colon\,i=1,\dots,r\bigr)(x)=T_xM
\quad\hbox{for all }x\in M.\leqno\hbox{(H1)}
$$
Note that (H1) is equivalent to the following H\"ormander condition for
${\partial\over\partial t}+L$ on $\R\times M$:
\begin{equation*}
  \dim\hbox{Lie}\Bigl(\textstyle{\partial\over\partial t}
  +A_0, A_1,\dots,A_r\Bigr)(t,x)=n+1\quad\hbox{for all }(t,x)\in\R\times M.
\end{equation*}  
By H\"ormander's theorem, under (H1) the semigroup \eqref{eq:SemiGroupMin} is
strongly Feller (mapping bounded measurable functions on $M$ to bounded
continuous functions on $M$) and has a smooth density $p\in
C^\infty({]0,\infty[}\times M\times M)$ such that
\begin{equation*}
  P\bigl\{X_t(x)\in dy,\ t<\zeta(x)\bigr\}=p(t,x,y)\vol(dy),\quad t>0,\ x\in M,
\end{equation*}
see \cite{BI 81} for a probabilistic discussion.

In this paper we are concerned with the problem of finding stochastic
representations, under hypothesis (H1), for the derivative $d(P_tf)$ of
\eqref{eq:SemiGroupMin} which do not involve derivatives of $f$.  Analogously,
in the situation of $L$-harmonic functions $\map uD\R$, given on some domain
$D$ in $M$ by its boundary values $u\vert\partial D$ via
\begin{equation}
  \label{eq:ReprHarmFunct}
  u(x)=\E\,[u\circ X_{\tau(x)}(x)], 
\end{equation}
formulas are developed for $du$ not involving derivatives of the boundary
function; here $\tau(x)$ is the first exit time of $X(x)$ from $D$.

The paper is organized as follows.  In Section~\ref{Sect2} we collect some background on
Malliavin calculus related to hypoelliptic diffusions.  In Section~\ref{Sect3} we
explain our approach to integration by parts in the hypoelliptic case which
leads to differentiation formulas for hypoelliptic semigroups.  Section~\ref{Sect4} is
devoted to integration by parts formulas at the level of local martingales.
In Section~\ref{Sect5} control theoretic aspects related to differentiation formulas are
discussed.  It is shown that the solvability of a certain control problem
leads to simple formulas in particular cases, however the method turns out not
to cover the full hypoelliptic situation.  We deal with the general situation
in Section~\ref{Sect7} where we refine the arguments of Section~\ref{Sect4} and \ref{Sect5} to give
probabilistic representations for the derivative of semigroups and
$L$-harmonic functions in the hypoelliptic case.  A crucial step in this
approach is the use of the optional sampling theorem to obtain 
local formulas by appropriate stopping times, as in the elliptic case \cite{TH 97},
\cite{T-W 98}.  Our formulas are in terms of the derivative flow and
Malliavin's covariance; hence they are neither unique nor intrinsic: the 
appearing terms depend on the specific SDE and not just on the generator.

Finally, in Section~\ref{Sect8}, we deal with possible extensions to nonlinear
situations, like the case of harmonic maps and nonlinear heat equations for
maps taking values in curved targets.

All presented formulas do not require full H\"ormander's Lie algebra
condition~(H1) but rather invertibility and integrability of the inverse
Malliavin covariance which is known to be slightly weaker, but still
sufficient to imply hypoellipticity of $\frac\partial{\partial t}+L$.  In
particular, (H1) is allowed to fail on a collection of hypersurfaces.  The
reader is referred to \cite{B-M 95} for precise statements in this direction.

\section{Hypoellipticity and the Malliavin Covariance}\label{Sect2}
\setcounter{equation}0\noindent Let $B\in\Gamma(TM)$ be a vector field on $M$.
We consider the push-forward $X_{t\ast}B$ (resp.\ pull-back $X_{t\ast}^{-1}B$)
of $B$ under the partial flow $X_t(\,\nbull\,)$ to the
system~\eqref{eq:StratonovichSDE}, more precisely,
\begin{equation}
  \label{eq:PushedVF}
  \begin{split}
    (X_{t\ast}^\mstrut B)_x=\bigl(T_{X_t^{-1}(x)}X_t\bigr)\,B_{X_t^{-1}(x)}\,
    &,\quad x\in R_t,\\
    (X_{t\ast}^{-1}B)_x=\bigl(T_{X_t(x)}^\mstrut X_t\bigr)^{-1}\,B_{
      X_t(x)}^\mstrut\, &,\quad x\in M_t.
  \end{split}
\end{equation}
Note that $X_{t\ast}B$, resp.\ $X_{t\ast}^{-1}B$, are smooth vector fields on
$R_t$, resp.\ $M_t$, well-defined for all $\omega\in\Omega_0$.  By definition,
\begin{equation}
  \label{eq:PushedVFRewritten}
  \begin{split}
    (X_{t\ast}^\mstrut B)_x\,f&=B_{X_t^{-1}(x)}\,(f\circ X_t)\,,
    \quad x\in R_t,\\
    (X_{t\ast}^{-1}B)_x\,f&=B_{X_t(x)}^\mstrut\,(f\circ X_t^{-1})\,, \quad
    x\in M_t,
  \end{split}
\end{equation}
for germs $f$ of smooth functions at $x$.

\begin{thm}
  \label{SDETransp}
  The pushed vector fields $X_{t\ast}^\mstrut B$ and $X_{t\ast}^{-1}B$ as
  defined by {\rm\eqref{eq:PushedVF}} satisfy the following SDEs:
  \begin{align}
    \label{eq:SDEPushedVf}
    \delta(X_{t\ast}^\mstrut B) &=\sum_{i=1}^r\bigl[X_{t\ast}^\mstrut
    B,A_i\bigl]\,\delta Z^i_t
    +\bigl[X_{t\ast}^\mstrut B,A_0\bigl]\,dt\\
    \label{eq:SDEPushedVfInv}
    \delta(X_{t\ast}^{-1}B)
    &=\sum_{i=1}^r\bigl(X_{t\ast}^{-1}[A_i,B]\bigr)\,\delta Z^i_t
    +\bigl(X_{t\ast}^{-1}[A_0,B]\bigr)\,dt.
  \end{align}
\end{thm}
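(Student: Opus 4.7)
The plan is to verify both SDEs by Stratonovich calculus in a local chart, using the variation equation for the derivative flow $J_t(x) := T_xX_t$ obtained by formally differentiating \eqref{eq:StratonovichSDE} in the initial condition. Writing $dV$ for the Jacobian of a vector field $V$ in the chart, one has
\begin{equation*}
\delta J_t = \sum_{i=1}^r dA_i(X_t)\,J_t\,\delta Z^i_t + dA_0(X_t)\,J_t\,dt,
\end{equation*}
and the Stratonovich product rule applied to $J_t J_t^{-1} = \id$ gives
\begin{equation*}
\delta J_t^{-1} = -\sum_{i=1}^r J_t^{-1}\,dA_i(X_t)\,\delta Z^i_t - J_t^{-1}\,dA_0(X_t)\,dt.
\end{equation*}

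For \eqref{eq:SDEPushedVfInv}, I fix $x \in M_t$ and apply the Stratonovich product rule to $(X_{t\ast}^{-1}B)_x = J_t^{-1}\,B(X_t(x))$. The $\delta Z^i_t$-coefficient becomes
\begin{equation*}
-J_t^{-1}\,dA_i(X_t)\,B(X_t(x)) + J_t^{-1}\,dB(X_t)\,A_i(X_t(x)) = J_t^{-1}\,[A_i,B](X_t(x)) = (X_{t\ast}^{-1}[A_i,B])_x,
\end{equation*}
where the middle equality is the coordinate identity $[A_i,B] = dB\cdot A_i - dA_i\cdot B$. The $dt$-term is identical with $A_0$ in place of $A_i$, giving \eqref{eq:SDEPushedVfInv}.

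For \eqref{eq:SDEPushedVf}, the subtlety is that $X_{t\ast}B$ is a time-dependent vector field on the random moving set $R_t$, so its SDE must be disentangled from the motion of the evaluation point. I fix $x_0 \in M_t$ and set $y_t := X_t(x_0)$; since $(X_{t\ast}B)_{y_t} = J_t\,B(x_0)$, the variation equation gives directly
\begin{equation*}
\delta\bigl[(X_{t\ast}B)_{y_t}\bigr] = \sum_{i=1}^r dA_i(y_t)\,(X_{t\ast}B)_{y_t}\,\delta Z^i_t + dA_0(y_t)\,(X_{t\ast}B)_{y_t}\,dt.
\end{equation*}
On the other hand, if the sought SDE takes the form $\delta(X_{t\ast}B) = \sum_i C_i\,\delta Z^i_t + C_0\,dt$ for some vector fields $C_i, C_0$ on $R_t$, then applying the Stratonovich chain rule to $(t,y)\mapsto (X_{t\ast}B)_y$ along the curve $y_t$ produces the additional contribution $d(X_{t\ast}B)(y_t)\,A_i(y_t)\,\delta Z^i_t$ (and analogously for $dt$). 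Equating the two expressions yields
\begin{equation*}
C_i(y_t) = dA_i(y_t)(X_{t\ast}B)_{y_t} - d(X_{t\ast}B)(y_t)\,A_i(y_t) = [X_{t\ast}B,A_i](y_t),
\end{equation*}
and similarly $C_0 = [X_{t\ast}B,A_0]$. Since $x_0$ ranges over $M_t$ and $y_t$ accordingly over $R_t$, this identifies $C_i, C_0$ on all of $R_t$ and proves \eqref{eq:SDEPushedVf}.

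The main obstacle is precisely the bookkeeping required in \eqref{eq:SDEPushedVf}: one has to separate the contribution from the evaluation point moving with the flow from the time-variation of the vector field itself. The Lie brackets then fall out automatically from the algebraic identity $V^j\partial_j W^k - W^j\partial_j V^k = [V,W]^k$. Throughout only smoothness of the coefficients $A_0,\dots,A_r$ and the smooth dependence of the flow on initial conditions (item~(iii) of the excerpt) are used; the hypoellipticity hypothesis~(H1) plays no role in this theorem.
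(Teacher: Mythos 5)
Your proof is correct and fills in exactly the computation the paper leaves to the reference (the paper's ``proof'' is simply the citation to Kunita [KU 81], Section 5, where the argument is of the same nature). The pull-back case is a clean application of the Stratonovich product rule to $J_t^{-1}B(X_t(x))$ combined with the first-variation equation for $J_t=T_xX_t$, and the bracket identity $[V,W]=dW\cdot V-dV\cdot W$ closes it. For the push-forward case, your evaluation along the moving point $y_t=X_t(x_0)$ together with the Stratonovich It\^o--Wentzell decomposition is the right way to separate the time-variation of the random field $X_{t\ast}B$ from the motion of the base point; this is where the flow theory of Kunita is tacitly used (to ensure $(t,y)\mapsto(X_{t\ast}B)_y$ is a sufficiently regular semimartingale field so that the chain rule applies and the coefficients $C_i,C_0$ exist), and you are right to emphasize this as the bookkeeping subtlety. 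Your final remark that (H1) plays no role here is also correct: Theorem~\ref{SDETransp} is purely a statement about smooth stochastic flows and uses only the smoothness of the coefficients $A_0,\dots,A_r$ and property (iii) of the partial flow.
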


\begin{proof} 
  See Kunita {\cite{KU 81}}, section 5.
\end{proof}

We have the famous ``invertibility of the Malliavin covariance matrix'' under
the H\"ormander condition (H1), e.g., see~Bismut~\cite{BI 81}, Prop.~4.1.

\begin{thm}
  \label{MallInv}
  Suppose {\rm(H1)} holds.  Let $\sigma$ be a predictable stopping time, $x\in
  M$.  Then, a.s., for any predictable stopping time $\tau<\zeta(x)$, on
  $\{\sigma<\tau\}$
  \begin{equation*}
    \sum_{i=1}^r\int_\sigma^\tau
    (X_{s\ast}^{-1}A_i)_x^\mstrut\otimes(X_{s\ast}^{-1}A_i)_x^\mstrut\,ds
    \in T_xM\otimes T_xM
  \end{equation*} 
  is a positive definite quadratic form on $T_x^\ast M$.  In particular, a.s.,
  for each $t>0$,
  \begin{equation}
    \label{eq:MallCov}
    C_t(x)
    =\sum_{i=1}^r\int_0^t
    (X_{s\ast}^{-1}A_i)_x^\mstrut\otimes(X_{s\ast}^{-1}A_i)_x^\mstrut\,ds
  \end{equation}
  defines a positive symmetric bilinear form on $T_x^\ast M$ for $x\in M_t$.
\end{thm}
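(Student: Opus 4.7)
The plan is to follow the classical Stroock--Bismut contradiction argument: propagate the vanishing of certain scalar semimartingales along successive Lie brackets via Norris's lemma, then contradict~(H1).

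For any $\xi\in T_x^\ast M$ and any smooth vector field $B$ on $M$, set
\begin{equation*}
  M^B_s:=\langle\xi,(X_{s\ast}^{-1}B)_x\rangle.
\end{equation*}
The SDE \eqref{eq:SDEPushedVfInv} of Theorem~\ref{SDETransp} displays $M^B$ as a continuous real semimartingale whose Stratonovich differential is a linear combination of $M^{[A_i,B]}$, $i=0,1,\dots,r$. Suppose the first assertion fails: then on an event $F\subset\{\sigma<\tau\}$ of positive probability there exists a measurable unit cotangent vector $\xi=\xi(\omega)\in T_x^\ast M$ such that $M^{A_i}_s=0$ throughout $[\sigma,\tau]$ for all $i=1,\dots,r$; indeed, $\int_\sigma^\tau\sum_i(M^{A_i}_s)^2\,ds=0$ together with path continuity of $s\mapsto M^{A_i}_s$ forces pointwise vanishing on the interval.

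On $F$, apply Norris's lemma to each $M^{A_i}$: since the semimartingale vanishes on an interval of positive length, both its martingale and its finite-variation parts vanish separately there. Passing from the Stratonovich form \eqref{eq:SDEPushedVfInv} to It\^o form only introduces further brackets among the $A_j$'s, so the conclusion is $M^{[A_j,A_i]}_s=0$ and $M^{[A_0,A_i]}_s=0$ for all $i,j\in\{1,\dots,r\}$ and all $s\in[\sigma,\tau]$. Iterating---the right-hand side of \eqref{eq:SDEPushedVfInv} keeps producing only nested brackets of the $A_k$'s---yields $M^V_s=0$ on $[\sigma,\tau]$ for every $V$ in the ideal generated by $A_1,\dots,A_r$ inside $\mathrm{Lie}(A_0,A_1,\dots,A_r)$, which contains $\mathrm{Lie}\bigl(A_i,[A_0,A_i]\colon i=1,\dots,r\bigr)$.

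Finally, pick (measurably, e.g.\ $s_0=(\sigma+\tau)/2$) any $s_0\in(\sigma,\tau)$ on $F$ and set $\eta:=(T_xX_{s_0})^{\ast-1}\xi\in T_{X_{s_0}(x)}^\ast M$. The identity $M^V_{s_0}=0$ reads $\langle\eta,V_{X_{s_0}(x)}\rangle=0$ for all $V$ in the above Lie algebra, which by (H1) spans $T_{X_{s_0}(x)}M$; hence $\eta=0$ and $\xi=0$, contradicting $|\xi|=1$. The second assertion follows by specializing to $\sigma=0$, $\tau=t$. The main obstacle is the rigorous implementation of the iterative Norris step: the argument has to be carried out uniformly, after exhausting $F$ by a countable family of subevents on which the vector fields, all iterated brackets that occur, the norm $\|X_{s\ast}^{-1}\|$ and the stopping times admit uniform bounds, so that Norris's lemma applies in its quantitative form. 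This is routine but technical book-keeping, carried out as in Bismut~\cite{BI 81}.
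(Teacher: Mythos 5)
The paper itself offers no proof of Theorem~\ref{MallInv}---it simply points to Bismut~\cite{BI 81}, Proposition~4.1---so your proposal has to be judged on its own terms. Your overall strategy, propagating the degeneracy along iterated Lie brackets via the pull-back SDE~\eqref{eq:SDEPushedVfInv} and contradicting~(H1), is exactly the Bismut--Stroock--Malliavin argument, and the Lie-theoretic bookkeeping (the iteration produces precisely the ideal generated by $A_1,\dots,A_r$ appearing in~(H1)) is correct. One small terminological point: for this \emph{almost-sure} positivity statement you do not actually need Norris's lemma. If a continuous semimartingale is constant on a time interval, then by uniqueness of the Doob--Meyer decomposition its local-martingale and bounded-variation parts are separately constant there (a continuous local martingale of finite variation is constant). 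Norris's lemma is the \emph{quantitative} refinement of this fact, and is what is needed for the $L^p$-integrability of $(\det C_t(x))^{-1}$ recorded in Remark~\ref{AllLp}, not for the qualitative positivity here.

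The genuine gap lies elsewhere and is not the obstacle you flag at the end. On the bad event you choose a measurable unit covector $\xi=\xi(\omega)$ in $\ker\bigl(C_\tau(x)-C_\sigma(x)\bigr)$; but this kernel is only $\SF_\tau$-measurable, so $\xi$ anticipates, and the processes $s\mapsto M^B_s=\langle\xi,(X_{s\ast}^{-1}B)_x\rangle$ on $[\sigma,\tau]$ are \emph{not} semimartingales with respect to $(\SF_s)$; neither the Doob--Meyer uniqueness argument nor Norris's lemma can be applied to them as written. The missing step is the classical preliminary reduction (Malliavin, Bismut, Kusuoka--Stroock, Nualart): one considers the increasing limit of kernels $K_{\sigma+}:=\bigcup_{\tau>\sigma}\ker\bigl(C_\tau(x)-C_\sigma(x)\bigr)$. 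Since the $K_\tau$ are finite-dimensional and increase as $\tau\downarrow\sigma$, the union stabilizes and, under the usual conditions on the filtration, $K_{\sigma+}$ is $\SF_{\sigma+}=\SF_\sigma$-measurable; for $\sigma=0$ it is deterministic by Blumenthal's $0$--$1$ law, and in general it is determined conditionally given $\SF_\sigma$ via the strong Markov property. Taking $\xi$ from $K_{\sigma+}$ makes the processes $M^B$, $s\ge\sigma$, honest adapted semimartingales (vanishing on a nondegenerate stochastic interval $[\sigma,\tau^*]$ with $\tau^*>\sigma$ a stopping time), and then your propagation argument runs. Since $\ker(C_\tau-C_\sigma)\subseteq K_{\sigma+}$ for every $\tau>\sigma$, showing $K_{\sigma+}=\{0\}$ a.s.\ also delivers the uniform ``a.s.\ for all $\tau$'' form of the statement. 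Your closing paragraph attributes the remaining work to uniform bounds needed for a quantitative Norris-type estimate; that is the bookkeeping for Remark~\ref{AllLp}, whereas the adaptedness/measurability reduction sketched above is what is actually needed to close the present proof.
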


Thus, a.s., $C_t$ provides a smooth section of the bundle $TM\otimes TM$ over
$M_t$ with the property that all $C_t(x)$ are symmetric and positive definite.
We may choose a non-degenerate inner product $\langle\,\cdot,\cdot\,\rangle$
on $T_xM$ and read $C_t(x)\in T_xM\otimes T_xM$ as a positive definite
bilinear form on $T_xM$:
\begin{equation*}
  \bigl\langle C_t(x)u,v\bigr\rangle 
  =\sum_{i=1}^r\int_0^t\bigl\langle(X_{s\ast}^{-1}A_i)_x,u\bigr\rangle\,
  \bigl\langle(X_{s\ast}^{-1}A_i)_x,v\bigr\rangle\,ds,
  \quad u,v\in T_xM.
\end{equation*}
Under (H1) the ``random matrix'' $C_t(x)$ is invertible for $t>0$ and $x\in
M_t$.  The following property is a key point in the stochastic calculus of
variation, e.g., \cite{NO 86}, \cite{KU 90}, \cite{NU 95}.

\begin{remark}
  \label{AllLp}
  Under hypothesis (H1) and certain boundedness conditions on the vector
  fields $A_0,A_1,\dots,A_r$ (which are satisfied for instance if $M$ is
  compact) we have $(\det C_t(x))^{-1}\in L^p$ for all $1\leq p<\infty$.  In
  the same way,
  \begin{equation}
    \label{eq:InvMallCovLp}
    (\det C_\sigma(x))^{-1}\in L^p\quad\hbox{for $1\leq p<\infty$}
  \end{equation}
  if $\sigma=\tau_D^\mstrut(x)$ or $\sigma=\tau_D^\mstrut(x)\wedge t$ for some
  $t>0$ where $\tau_D^\mstrut(x)$ is the first exit time of $X_\tbull(x)$ from
  some relatively compact open neighbourhood $D\not=M$ of $x$.  Also note that
  $\tau_D^\mstrut(x)\in L^p$ for all $1\leq p<\infty$, e.g.~\cite{B-K-S 84},
  Lemma~(1.21).
\end{remark}

In the subsequent sections we adopt the following notation.  By definition,
$C_t(x)\in T_xM\otimes T_xM$, thus $\map{C_t(x)}{T^\ast_xM}{T_xM}$ and
$\map{C_t(x)^{-1}}{T_xM}{T^\ast_xM}$.  On the other hand,
\begin{equation}
  \label{eq:XinverseA}
  \map{(X_{s\ast}^{-1}A)_x^\mstrut}{\R^r}{T_xM},\quad
  z\mapsto\sum_{i=1}^r(X_{s\ast}^{-1}A_i)_x^\mstrut\,z^i.
\end{equation}
Let $\map{(X_{s\ast}^{-1}A)^\ast_x}{T^\ast_xM}{(\R^r)^\ast\equiv\R^r}$ be the
adjoint (dual) map to \eqref{eq:XinverseA}, then we may write
\begin{equation}
  \label{eq:MallCovRewritten}
  C_t(x)=\int_0^t (X_{s\ast}^{-1}A)_x^\mstrut\,(X_{s\ast}^{-1}A)^\ast_x\,ds
\end{equation}
for the Malliavin covariance.  In the sequel we usually identify $(\R^r)^\ast$
and $\R^r$.

\section{A Basic Integration by Parts Argument}\label{Sect3}
\setcounter{equation}0\noindent In this section we explain an elementary
strategy for integration by parts formulas which will serve us as a guideline
in the sequel.  The argument is inspired by Bismut's original approach to
Malliavin calculus~\cite{BI 81}.

Consider again the SDE \eqref{eq:StratonovichSDE} and assume (H1) to be
satisfied.  For simplicity, we suppose that $M$ is compact.  Let $a$ be a
predictable process taking values in $T_xM\otimes(\R^r)^\ast\equiv
T_xM\otimes\R^r$ and $\lambda\in T^\ast_xM$ such that for each $t>0$,
\begin{equation}
  \label{eq:CondGirsanov}
  \E\left[\exp\left({1\over2}\int_0^t\vert a_s\lambda\vert^2\,ds
    \right)\right]<\infty,\quad
  \hbox{ $\lambda$ locally about $0$.}
\end{equation}
Let $dZ^\lambda=dZ+a\lambda\,dt$ and consider the Girsanov exponential
$G^\lambda_\tbull$ defined by
\begin{equation}
  G^\lambda_t=\exp\left(-\int_0^t\,\langle a_s\lambda,dZ_s\rangle
    -{1\over2}\,\int_0^t\vert a_s\lambda\vert^2\,ds\right).
\end{equation}
Write $X^\lambda$ for the flow to our SDE driven by the perturbed BM
$Z^\lambda$, analogously $C_\tbull^\lambda(x)$ etc.  By definition,
$C_\tbull^\lambda(x)\in T_xM\otimes T_xM$ is a linear map from $T^\ast_xM$ to
$T_xM$ and $\map{C_\tbull^\lambda(x)^{-1}}{T_xM}{T^\ast_xM}$.

\begin{lemma}
  \label{DerMallInv}
  For any vector field $B\in\Gamma(TM)$ we have
  \begin{equation}
    \label{eq:FormulaLieBracket}
    {\partial\over\partial\lambda_k}
    \biggl\vert_{\lambda=0}(X_{t\ast}^\lambda)^{-1}(B)
    =\sum_{i=1}^r\left[\int_0^tX_{s\ast}^{-1}(A_i)\,a_s^{ik}\,ds\,,\,
      X_{t\ast}^{-1}(B)\right]
  \end{equation}
  in terms of the Lie bracket\/ $[\,,\,]$.
\end{lemma}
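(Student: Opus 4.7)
The strategy is to verify that both sides of \eqref{eq:FormulaLieBracket}, regarded as $T_xM$-valued processes parametrized by the vector field $B$, satisfy the same linear Stratonovich SDE with zero initial value, and to conclude by uniqueness.

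Apply Theorem~\ref{SDETransp} to the perturbed flow $X^\lambda$. Since $\delta Z^\lambda=\delta Z+a\lambda\,dt$ and the finite-variation parts of Stratonovich and It\^o differentials coincide, \eqref{eq:SDEPushedVfInv} yields
\begin{equation*}
  \delta\bigl((X_{t\ast}^\lambda)^{-1}B\bigr)
  =\sum_i\bigl((X_{t\ast}^\lambda)^{-1}[A_i,B]\bigr)\,\delta Z^i_t
  +\sum_{i,k}\bigl((X_{t\ast}^\lambda)^{-1}[A_i,B]\bigr)\,a_t^{ik}\lambda_k\,dt
  +\bigl((X_{t\ast}^\lambda)^{-1}[A_0,B]\bigr)\,dt.
\end{equation*}
Differentiating in $\lambda_k$ at $\lambda=0$, the process $\tilde Y_t(B):=\partial_{\lambda_k}|_{\lambda=0}(X_{t\ast}^\lambda)^{-1}(B)$ solves
\begin{equation*}
  \delta\tilde Y_t(B)
  =\sum_i\tilde Y_t([A_i,B])\,\delta Z^i_t
  +\sum_i X_{t\ast}^{-1}[A_i,B]\,a_t^{ik}\,dt
  +\tilde Y_t([A_0,B])\,dt,\qquad\tilde Y_0(B)=0.
\end{equation*}

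Next, denote the right-hand side of \eqref{eq:FormulaLieBracket} by $L_t(B):=[U_t,X_{t\ast}^{-1}(B)]$ with $U_t:=\sum_i\int_0^t X_{s\ast}^{-1}(A_i)\,a_s^{ik}\,ds$, viewed as a smooth random vector field on $M_t$. The Stratonovich Leibniz rule for the bilinear Lie bracket, \eqref{eq:SDEPushedVfInv} applied to $X_{t\ast}^{-1}(B)$, and the identity $[X_{t\ast}^{-1}A_i,X_{t\ast}^{-1}B]=X_{t\ast}^{-1}[A_i,B]$ (pullback by a diffeomorphism preserves Lie brackets) together show that $L_t(B)$ satisfies the very same SDE as $\tilde Y_t(B)$; since $L_0(B)=0$, uniqueness yields $\tilde Y_t(B)=L_t(B)$.

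The main obstacle is justifying the interchange of $\partial_{\lambda_k}|_{\lambda=0}$ with the Stratonovich integral in the first step. Under compactness of $M$ and the Girsanov-type integrability condition \eqref{eq:CondGirsanov}, standard $L^p$ estimates on the perturbed flow $X^\lambda$ and its derivative legitimize this via a Picard/Gronwall argument on $[0,T]$; uniqueness of the resulting linear SDE reduces to a finite-dimensional linear equation by expressing $B$ in a local frame around $x$.
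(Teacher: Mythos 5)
Your proof takes a genuinely different route from the paper's. The paper factors the perturbed flow as $X_t^\lambda=X_t\circ\varrho_t^\lambda$, where $\varrho^\lambda$ solves a deterministic (random-coefficient) ODE absorbing the Girsanov drift; it computes $\partial_{\lambda_k}\vert_{\lambda=0}\varrho_t^\lambda=\sum_i\int_0^tX_{s\ast}^{-1}(A_i)\,a_s^{ik}\,ds$, rewrites $(X_{t\ast}^\lambda)^{-1}B$ as the pullback $(\varrho_t^\lambda)^*(X_{t\ast}^{-1}B)$, and then recognizes $\partial_{\lambda_k}\vert_{\lambda=0}$ of a pullback as a Lie derivative, i.e.\ the bracket. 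No uniqueness theorem for SDEs is needed. Your approach instead differentiates the SDE of Theorem~\ref{SDETransp} in the parameter, verifies that the bracket expression satisfies the same linear SDE, and invokes uniqueness. The paper's route is shorter and identifies the geometric content (Lie derivative) directly; yours isolates the algebraic content (the Leibniz rule and bracket-equivariance of pullback) but offloads the analytic work onto an SDE uniqueness step.

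That uniqueness step is where your argument has a real gap. The SDE you write for $\tilde Y_t(B)$ couples $\tilde Y_t(B)$ to $\tilde Y_t([A_i,B])$ and $\tilde Y_t([A_0,B])$, and these in turn to $\tilde Y_t$ applied to iterated brackets. So, for a fixed $B$, this is not a closed finite-dimensional system. Your proposed fix — expressing $B$ in a local frame — does not work, because $\tilde Y_t$ is \emph{not} $C^\infty(M)$-linear in $B$: it is a first-order differential operator (it sees the $1$-jet of $B$ along the trajectory), so writing $B=\sum_j B^je_j$ does not reduce the problem to the finitely many basis vector fields $e_j$. Moreover, applying $[A_i,\cdot]$ raises the order in $B$, so the naive hierarchy involves arbitrarily high jets of $B$ and a priori does not stay in the class of first-order operators. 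To make uniqueness rigorous one would need either an a priori estimate in a suitable operator class that the SDE preserves, or a structural observation such as conjugating $D_t:=\tilde Y_t-L_t$ by $X_{t\ast}$ to remove the transport — which is essentially what the paper's decomposition $X_t^\lambda=X_t\circ\varrho_t^\lambda$ accomplishes from the outset. As written, the uniqueness claim is asserted, not proved.

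The rest of the computation is correct: the differentiation of the perturbed SDE in $\lambda$, the Stratonovich Leibniz rule for the bracket, the identity $[X_{t\ast}^{-1}A_i,X_{t\ast}^{-1}B]=X_{t\ast}^{-1}[A_i,B]$, and the verification that $L_t(B)=[U_t,X_{t\ast}^{-1}B]$ with $U_t=\sum_i\int_0^tX_{s\ast}^{-1}(A_i)\,a_s^{ik}\,ds$ solves the same equation, all check out. Only the final appeal to uniqueness needs to be repaired or replaced (e.g.\ by the flow-decomposition argument the paper uses).
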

  
\begin{proof} 
  Note that $X_t^\lambda(x)=X_t\circ\varrho_t^\lambda(x)$ where
  $\varrho^\lambda(x)$ solves
  \begin{equation*}
    \left\lbrace\begin{aligned}
        d\varrho_t^\lambda&=X_{t\ast}^{-1}(A)(\varrho_t^\lambda)\,a_t\lambda\,dt\\ 
        \varrho_0^\lambda&=x.\end{aligned}\right.
  \end{equation*}
  In particular, we have
  \begin{equation*} {\partial\over\partial\lambda_k}
    \biggl\vert_{\lambda=0}\varrho_t^\lambda=
    \sum_{i=1}^r\int_0^t X_{s\ast}^{-1}(A_i)\,a_s^{ik}\,ds.
  \end{equation*}
  Moreover, from $X_{t\ast}^\lambda(x)
  =\bigl(T_{\varrho_t^\lambda(x)}X_t\bigr)(T_x\varrho_t^\lambda)$ we conclude
  that
$$
\bigl((X_{t\ast}^\lambda)^{-1}B\bigr){}_x=(T_x\varrho_t^\lambda)^{-1}
\bigl(T_{\varrho_t^\lambda(x)}X_t\bigr)^{-1}\,
B\bigl(X_t\circ\varrho_t^\lambda(x)\bigr)
\equiv(T_x\varrho_t^\lambda)^{-1}(X_{t\ast}^{-1}B)_{\varrho_t^\lambda(x)}.
$$
This gives the claim by definition of the bracket.
\end{proof}

\begin{thm}
  \label{ResultCrude}
  Let $M$ be compact and $f\in C^1(M)$. Assume that {\rm(H1)} is satisfied.
  Then, for each $v\in T_xM$,
  \begin{equation}
    \label{eq:FormulaCrude}
    d(P_tf)_xv=\E\left[\bigl(f\circ X_t(x)\bigr)\,\Phi_t\,v\right]  
  \end{equation}
  where $\Phi$ is an adapted process with values in $T_x^\ast M$ such that
  each $\Phi_t$ is $L^p$ for any $1\leq p<\infty$.
\end{thm}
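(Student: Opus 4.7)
My plan is to follow the Bismut strategy announced at the start of this section: a Girsanov perturbation of the driving Brownian motion converts $d(P_tf)_xv$ into an expectation no longer involving $df$, and the invertibility of $C_t(x)$ provided by Theorem~\ref{MallInv}, together with the $L^p$-integrability from Remark~\ref{AllLp}, supplies the coupling between the target tangent vector $v$ and an admissible perturbation.

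First, I would set up the Girsanov identity. For any predictable $a$ satisfying \eqref{eq:CondGirsanov}, $G_t^\lambda$ is a true martingale and $Z^\lambda$ is a Brownian motion under the tilted measure $G_t^\lambda\cdot P$, so the law of $X_t^\lambda(x)$ under $G_t^\lambda\cdot P$ equals the law of $X_t(x)$ under $P$. Hence
\begin{equation*}
  (P_tf)(x)=\E\bigl[G_t^\lambda\cdot f\circ X_t^\lambda(x)\bigr]
  \quad\text{for $\lambda$ near }0.
\end{equation*}
Writing $X_t^\lambda(x)=X_t\circ\varrho_t^\lambda(x)$ as in the proof of Lemma~\ref{DerMallInv} and differentiating both sides in $\lambda$ at $\lambda=0$ in an arbitrary direction $\mu\in T_x^\ast M$, the left-hand side being independent of $\lambda$, one obtains
\begin{equation*}
  \E\Bigl[df_{X_t(x)}\Bigl(X_{t\ast}\sum_{i=1}^r\int_0^t X_{s\ast}^{-1}A_i\,(a_s\mu)^i\,ds\Bigr)\Bigr]
  =\E\Bigl[f\circ X_t(x)\int_0^t\langle a_s\mu,dZ_s\rangle\Bigr],
\end{equation*}
where the right-hand side comes from differentiating the exponential $G_t^\lambda$.

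The second step is to specialize $a$ and $\mu$. Take $a_s:=(X_{s\ast}^{-1}A)_x^\ast$ and $\mu:=C_t(x)^{-1}v$; by \eqref{eq:MallCovRewritten} the inner sum equals $C_t(x)\mu=v$ and the identity becomes $d(P_tf)_xv=\E[f\circ X_t(x)\cdot\Phi_tv]$ with
\begin{equation*}
  \Phi_tv:=\int_0^t\bigl\langle(X_{s\ast}^{-1}A)_x^\ast\,C_t(x)^{-1}v,\,dZ_s\bigr\rangle.
\end{equation*}
Linearity in $v$ places $\Phi_t$ in $T_x^\ast M$; its $L^p$-moments for every $1\leq p<\infty$ follow by combining Remark~\ref{AllLp} (for $C_t(x)^{-1}$), compactness of $M$ (giving uniform $L^p$-bounds on $X_{s\ast}^{-1}A_i$ for $s\leq t$), H\"older's inequality, and the Burkholder--Davis--Gundy inequality.

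The obstacle --- and the reason this section is presented as an \emph{elementary strategy} rather than as a fully rigorous argument --- is that $\mu=C_t(x)^{-1}v$ is only $\SF_t$-measurable, so $a\mu$ is not predictable. Consequently both the Girsanov identity and the stochastic integral defining $\Phi_tv$ must be read in the anticipative (Malliavin/Skorohod) sense rather than as It\^o integrals. This is consistent with the theorem's claim, which only demands $\Phi_t$ to be $\SF_t$-measurable in every $L^p$; the adapted reformulation, obtained by stopping and applying the optional sampling theorem, is deferred to Sections~\ref{Sect4}, \ref{Sect5} and~\ref{Sect7}.
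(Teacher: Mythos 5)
Your overall strategy (Girsanov perturbation plus the Malliavin covariance) matches the paper's, but there is a genuine gap at precisely the point you flag. The identity you derive by differentiating $(P_tf)(x)=\E\bigl[G_t^\lambda\,f\circ X_t^\lambda(x)\bigr]$ in $\lambda$ holds for \emph{deterministic} $\mu$ only; the step ``specialize $\mu:=C_t(x)^{-1}v$'' substitutes an $\SF_t$-measurable random variable into an identity that was never proved for random coefficients. You correctly observe that this forces a Skorohod interpretation of $\int_0^t\langle a_s\mu,dZ_s\rangle$, but acknowledging the problem is not the same as resolving it: one would have to re-derive the identity from the Malliavin duality $\E[\langle DF,u\rangle_H]=\E[F\,\delta(u)]$ (with $F=f\circ X_t(x)$ and $u=a_\bull C_t(x)^{-1}v$), verify $u\in\mathrm{Dom}\,\delta$, and then estimate $\delta(u)$. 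Your appeal to Burkholder--Davis--Gundy for the $L^p$-bound is not available here because a Skorohod integral is not a martingale; one needs Meyer's inequalities or an explicit decomposition of $\delta(u)$ into an It\^o integral plus a trace term.

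The paper circumvents all of this by a different organization of the Girsanov identity: it keeps $\lambda$ a constant vector and places $\bigl(C_t^\lambda(x)^{-1}\bigr)_{k\ell}v^\ell$ \emph{inside} the expectation, so that
\begin{equation*}
  H_k(\lambda)=\sum_\ell\E\Bigl[\bigl(f\circ X_t^\lambda(x)\bigr)\,G_t^\lambda\,
  \bigl(C_t^\lambda(x)^{-1}\bigr)_{k\ell}v^\ell\Bigr]
\end{equation*}
is independent of $\lambda$, and differentiating $\sum_k\partial_{\lambda_k}H_k$ at $\lambda=0$ stays entirely within adapted It\^o calculus. The resulting $\Phi_t$ has \emph{two} terms --- one from $\partial_\lambda G_t^\lambda$ and one from $\partial_\lambda C_t^\lambda(x)^{-1}$ --- and these two terms are exactly the It\^o-integral part and the trace correction that would appear when you expand your single Skorohod integral $\delta\bigl((X_{\bull\ast}^{-1}A)_x^\ast\,C_t(x)^{-1}v\bigr)$. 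So the two $\Phi_t$'s coincide, but the paper's route proves the formula without ever leaving adapted calculus, and handles the Novikov/integrability issues explicitly via the truncated choice $a_s^n=(X_{s\ast}^{-1}A)_x^\ast\,1_{\{s\le\tau_n\}}$ with $\tau_n\nearrow t$, passing to the limit at the end. To make your argument rigorous you would either need to supply the Skorohod-calculus machinery (duality, Meyer inequalities, domain of $\delta$), or reorganize as the paper does.
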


\begin{proof} We fix $x$ and identify $T_xM$ with $\R^n$.  By Girsanov's
  theorem, for $v\in T_xM$, the expression
  \begin{equation*}
    H_k(\lambda)=\sum_\ell\E\left[\bigl(f\circ X_t^\lambda(x)\bigr)\cdot
      G_t^\lambda\cdot \bigl(C_t^\lambda(x)^{-1}\bigr)_{k\ell}\,v^\ell\right]
  \end{equation*}
  is independent of $\lambda$ for any $C^1$-function $f$ on $M$.  Thus
  \begin{equation*}
    \displaystyle\sum_k{\partial\over\partial\lambda_k}
    \biggl\vert_{\lambda=0}H_k(\lambda)=0
  \end{equation*}
  which gives
  \begin{align*}
    &\sum_{i,k,\ell}\E\left[\bigl(D_if\bigr)\bigl(X_t(x)\bigr)\biggl(X_{t\ast}
      \int_0^t(X_{s\ast}^{-1}A)_x^\mstrut\,a_s\,ds\biggr)_{ik}
      \bigl(C_t(x)^{-1}\bigr)_{k\ell}\,v^\ell\right]\\
    =-&\sum_{k,\ell}\E\left[f\bigl(X_t(x)\bigr)\,
      {\partial\over\partial\lambda_k}\biggl\vert_{\lambda=0}
      \bigl(G_t^\lambda\,\bigl(C_t^\lambda(x)^{-1}\bigr)_{k\ell}\,v^\ell\bigr)\right]\\
    =-&\sum_{k,\ell}\E\left[f\bigl(X_t(x)\bigr)
      \left(\left({\partial\over\partial\lambda_k}\biggl\vert_{\lambda=0}\!
          G_t^\lambda\right)\bigl(C_t(x)^{-1}\bigr)_{k\ell}
        +{\partial\over\partial\lambda_k}\biggl\vert_{\lambda=0}
        \bigl(C_t^\lambda(x)^{-1}\bigr)_{k\ell}\right)\,v^\ell \right].
  \end{align*}
  Note that
  \begin{equation*} {\partial\over\partial\lambda_k}\biggl\vert_{\lambda=0}
    G_t^\lambda=-\left(\int_0^t a_s^\ast\,dZ_s\right)_k
  \end{equation*}
  where $a^\ast$ taking values in $T_xM\otimes(\R^r)^\ast$ is defined as the
  adjoint to $a$.  Furthermore,
  \begin{equation*}
    {\partial\over\partial\lambda_k}\biggl\vert_{\lambda=0}C_t^\lambda(x)^{-1}
    =-C_t(x)^{-1}\,\left({\partial\over\partial\lambda_k}\biggl\vert_{\lambda=0}
      C_t^\lambda(x)\right)\,C_t(x)^{-1}.
  \end{equation*} 
  Recall that $(X_{s\ast}^{-1}A)_x\in(\R^r)^\ast\otimes T_xM$.  We set
  \begin{equation*}
    a_s=a^n_s=(X_{s\ast}^{-1}A)_x^\ast\,1_{\{s\leq\tau_n\}}\in T_xM\otimes(\R^r)^\ast
  \end{equation*} 
  where $(\tau_n)$ is an increasing sequence of stopping times such that
  $\tau_n\nearrow t$ and such that each $a^n_\bull$ satisfies condition
  \eqref{eq:CondGirsanov}.  This gives a formula of the type
  \begin{equation}
    \label{eq:ApproxFormula}
    \E\bigl[(df)_{X_t(x)}\,X_{t\ast}\,C_{\tau_n}(x)\,C_t(x)^{-1}v\bigr]
    =\E\left[\bigl(f\circ X_t(x)\bigr)\cdot\Phi_t^n\,v\right]
  \end{equation}
  Finally, taking the limit as $n\to\infty$, we get
  \begin{equation}
    \label{eq:FirstFormula}
    d(P_tf)_xv=\E\bigl[(df)_{X_t(x)}\,X_{t\ast}v\bigr]
    =\E\left[\bigl(f\circ X_t(x)\bigr)\cdot\Phi_t\,v\right]
  \end{equation}
  where
  \begin{align*}
    \Phi_t\,v
    =&\left(\int_0^t(X^{-1}_{s\ast}A)_x^\mstrut\,dZ_s\right)C_t^{-1}(x)\,v\\
    &+\sum_{k,\ell}\left(C_t(x)^{-1}\left({\partial\over\partial\lambda_k}
        \biggl\vert_{\lambda=0}
        C_t^\lambda(x)\right)\,C_t(x)^{-1}\right)_{k\ell}\,v^\ell
  \end{align*}
  which can be further evaluated by means of \eqref{eq:FormulaLieBracket}.
  Eq.\ \eqref{eq:FormulaLieBracket} also allows to conclude that
  $\Phi_t\in\cap_{p\geq1}L^p$.
\end{proof}

\section{Integration by Parts at the Level of Local Martingales}\label{Sect4}
\setcounter{equation}0\noindent Let $F(\,\nbull\,,X_\tbull(x))$, $x\in M$ be a
family of local martingales where $F$ is differentiable in the second variable
with a derivative jointly continuous in both variables.  We are mainly
interested in the following two cases:
\begin{align*}
  F(\,\nbull\,,X_\tbull(x))
  &=u\circ X_\tbull(x)\quad\hbox{for some $L$-harmonic function $u$ on $M$, and}\\
  F(\,\nbull\,,X_\tbull(x))&=(P_{t-\tbull}f)\bigl(X_\tbull(x)\bigr)
  \hskip.3cm\hbox{for some bounded measurable $f$ on $M$, $t>0$.}
\end{align*}
Let $dF$ denote the differential of $F$ with respect to the second variable.

\begin{thm}
  \label{LocMart}
  Let $F(\,\nbull\,,X_\tbull(x))$, $x\in M$ be a family of local martingales
  as described above.  Then, for any predictable $\R^r$-valued process $k$ in
  $L^2_{\hbox{\sevenrm loc}}(Z)$,
  \begin{equation}
    \label{eq:QuasiDer}
    dF(\,\nbull\,,X_\tbull(x))\,(T_xX_\tbull)
    \int_0^\bull (X_{s\ast}^{-1}A)_x^\mstrut k_s\,ds
    -F(\,\nbull\,,X_\tbull(x))\int_0^\bull\langle k,dZ\rangle,\quad x\in M,
  \end{equation}
  is a family of local martingales.
\end{thm}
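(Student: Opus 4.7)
The plan is to decompose the process in \eqref{eq:QuasiDer} as $\Xi_t = A_t - B_t$ with
\[
A_t := dF(t, X_t(x))(J_t U_t),\qquad B_t := F(t, X_t(x))\int_0^t\langle k_s, dZ_s\rangle,
\]
where $J_t := T_xX_t$ and $U_t := \int_0^t (X_{s\ast}^{-1}A)_x^\mstrut k_s\,ds \in T_xM$, and then to verify that $A_t$ and $B_t$ have identical drifts modulo local martingales.

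The crux of the argument is the auxiliary claim that, for each fixed $v\in T_xM$, the process $G_t(v) := dF(t, X_t(x))(J_t v)$ is a continuous $T_x^\ast M$-valued local martingale. To see this, pick a smooth curve $\gamma\colon(-\delta,\delta)\to M$ with $\gamma(0)=x$ and $\dot\gamma(0)=v$, and note that by hypothesis the difference quotients
\[
N_t^\epsilon := \epsilon^{-1}\bigl(F(t, X_t(\gamma(\epsilon))) - F(t, X_t(x))\bigr)
\]
are local martingales. Combining the mean value theorem in the $y$-variable with joint continuity of $dF$ and of $y\mapsto(X_t(y), T_yX_t)$, one gets $N^\epsilon \to G(v)$ locally uniformly on $[0, \zeta(x))$ after stopping at exit times from compact neighborhoods of~$x$, and this forces $G(v)$ to be a local martingale.

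Once $G$ is available, $A_t = G_t(U_t)$ will be handled by integration by parts: since $U$ is absolutely continuous with $\dot U_t = (X_{t\ast}^{-1}A)_x^\mstrut k_t$ and has no quadratic variation, no cross term appears and $dA_t = G_t(dU_t) + (\text{loc.\ mart.})$. The identity $J_t(X_{t\ast}^{-1}A_i)_x = A_i(X_t)$ then gives $G_t(dU_t) = dF(t, X_t(x))(A(X_t)k_t)\,dt$. For $B_t$, It\^o's product rule reduces the drift to $\sum_i k_t^i\, d\langle F(\cdot, X_\cdot(x)), Z^i\rangle_t$; the It\^o form of \eqref{eq:StratonovichSDE}, which yields $d\langle X^\alpha, Z^i\rangle_t = A_i^\alpha(X_t)\,dt$ in coordinates, together with the $C^1$-regularity of $F$ in its second variable, identifies $d\langle F(\cdot, X_\cdot(x)), Z^i\rangle_t = dF(t, X_t(x))(A_i(X_t))\,dt$. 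The resulting drift of $B_t$ matches that of $A_t$, so $\Xi$ is a local martingale.

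The main technical obstacle is the first step — showing that differentiation of a family of local martingales in the initial condition yields a local martingale — which rests on the joint continuity assumption and localization via compact neighborhoods. Everything else is a routine combination of stochastic integration by parts with the It\^o identification of brackets against $Z$.
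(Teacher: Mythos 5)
Your argument is correct and coincides in substance with the paper's second (``alternative'') proof. The paper writes $m_s:=dF(s,\cdot)_{X_s(x)}X_{s\ast}$, cites~\cite{A-T 96} for the fact that $m$ is a local martingale, forms $n_s:=m_sh_s-\int_0^sm_rdh_r$ with $h=\int_0^\bull(X_{s\ast}^{-1}A)_xk_s\,ds$, and identifies $\int_0^\bull m_r\,dh_r\mequal F(\cdot,X_\cdot(x))\int_0^\bull\langle k,dZ\rangle$ using the stochastic-integral representation of $F(\cdot,X_\cdot(x))$; this is literally your computation that the drifts of $A$ and $B$ agree, since $A_s=m_sh_s$ and your bracket identification is the same fact as that representation read backwards. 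The only genuine difference is that you supply a direct difference-quotient proof of the key lemma (differentiation of a family of local martingales in the initial condition gives a local martingale) instead of quoting~\cite{A-T 96}; your sketch is sound, with the usual caveat that the localizing stopping times must be chosen so that $N^\epsilon$ is dominated by a deterministic constant, so that convergence in probability upgrades to $L^1$-convergence. Be aware that the paper also records a shorter alternative proof via Girsanov (perturb $Z$ by $\varepsilon k\,dt$, observe that $F(\cdot,X^\varepsilon_\cdot(x))G^\varepsilon_\cdot$ is a local martingale, and differentiate at $\varepsilon=0$), which you did not pursue; that route avoids the bracket computation but relies on the $C^1$-dependence in $\varepsilon$ in the topology of compact convergence in probability.
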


\begin{proof}[Proof\/ {\rm(}by means of Girsanov\/{\rm)}]
  For $\varepsilon$ varying locally about $0$, consider the SDE
  \begin{equation}
    \label{eq:SDEPert}
    \delta X^\varepsilon=A(X^\varepsilon)\,\delta Z^\varepsilon
    +A_0(X^\varepsilon)\,dt  
  \end{equation}
  with the perturbed driving process
  $dZ^\varepsilon=dZ+\varepsilon\,k\,dt$. Then, for each $\varepsilon$,
  \begin{equation}
    \label{eq:PertMart}
    F\bigl(\,\nbull\,,X^\varepsilon_\tbull(x)\bigr)\,G^\varepsilon_\tbull
  \end{equation}
  is again a local martingale when the Girsanov exponential
  $G^\varepsilon_\tbull$ is defined by
  \begin{equation*}
    G^\varepsilon_r=\exp\Bigl(-\int_0^r\varepsilon\,\langle k,dZ\rangle
    -{1\over2}\,\varepsilon^2\!\int_0^r\vert k\vert^2\,ds\Bigr).
  \end{equation*}
  Moreover, the local martingale \eqref{eq:PertMart} depends $C^1$ on the
  parameter $\varepsilon$ (in the topology of compact convergence in
  probability), thus
  \begin{equation*}
    {\partial\over\partial\varepsilon}\Bigl\vert_{\varepsilon=0}
    F\bigl(\,\nbull\,,X^\varepsilon_\tbull(x)\bigr)\,G^\varepsilon_\tbull
    ={\partial\over\partial\varepsilon}\Bigl\vert_{\varepsilon=0}
    F\bigl(\,\nbull\,,X^\varepsilon_\tbull(x)\bigr)
    +F\bigl(\,\nbull\,,X_\tbull(x)\bigr)\,
    {\partial\over\partial\varepsilon}\Bigl\vert_{\varepsilon=0}G^\varepsilon_\tbull
  \end{equation*}
  is also a local martingale. Taking into account that
  \begin{equation*}
    {\partial\over\partial\varepsilon}\Bigl\vert_{\varepsilon=0}X^\varepsilon_r(x)
    =X_{r\ast}\int_0^r X_{s\ast}^{-1}A\bigl(X_s(x)\bigr)k_s\,ds
  \end{equation*}
  and
  \begin{equation*}
    {\partial\over\partial\varepsilon}\Bigl\vert_{\varepsilon=0}G^\varepsilon_r=
    -\int_0^r\langle k,dZ\rangle,
  \end{equation*}
  we get the claim.
\end{proof}

\begin{proof}[Alternative proof\/ {\rm(of Theorem \ref{LocMart}\rm)}]
  First note that $m_s:=dF(s,\,\nbull\,)_{X_s(x)}\,X_{s\ast}$, as the
  derivative of a family of local martingales, is a local martingale in
  $T^\ast_xM$, see~\cite{A-T 96}.  Thus also
  \begin{equation*}
    n_s:=m_sh_s-\int_0^sm_rdh_r
  \end{equation*}
  is a local martingale for any $T_xM$-valued adapted process $h$ locally of
  bounded variation. Choosing
  \begin{equation*}
    h=\int_0^\bull (X_{s\ast}^{-1}A)_x^\mstrut k_s\,ds
  \end{equation*}
  and taking into account that
  \begin{equation*}
    F(\,\nbull\,,X_\tbull(x))
    =\int_0^\bull dF(s,\,\nbull\,)_{X_s(x)}\,A\bigl(X_s(x)\bigr)\,dZ,
  \end{equation*}
  the claim follows by noting that
  \begin{equation*}
    \int_0^\bull dF(s,\,\nbull\,)_{X_s(x)}\,X_{s\ast}\,dh_s\mequal 
    F(\,\nbull\,,X_\tbull(x))\int_0^\bull\langle k,dZ\rangle
  \end{equation*}
  where $\mequal$ denotes equality modulo local martingales.
\end{proof}

Let $a$ be a predictable process taking values in $T_xM\otimes(\R^r)^\ast$ as
in the last section. The calculation above shows that
\begin{equation*}
  n_s:=dF(s,\nbull\,)_{X_s(x)}\,X_{s\ast}
  \left(\int_0^s (X_{r\ast}^{-1}A)_x^\mstrut\,a_r\,dr\right)
  -F\bigl(s,X_s(x)\bigr)\int_0^sa_r^\ast\,dZ_r
\end{equation*}
is a local martingale in $T_xM$ which implies that
\begin{equation*}
  N_s:=n_sh_s-\int_0^sn_r\,dh_r
\end{equation*}
is also a local martingale for any $T^\ast_xM$-valued adapted process $h$
locally of bounded variation.  In particular, choosing again
$a_s=(X_{s\ast}^{-1}A)_x^\ast$, we get
\begin{align*}
  N_s&=dF(s,\nbull\,)_{X_s(x)}\,X_{s\ast}\,C_s(x)\,h_s-
  F\bigl(s,X_s(x)\bigr)\left(\int_0^s(X_{r\ast}^{-1}A)_x\,dZ_r\right)h_s\\
  &-\int_0^s dF(r,\nbull\,)_{X_r(x)}\,X_{r\ast}\,C_r(x)\,dh_r +\int_0^s
  F\bigl(r,X_r(x)\bigr)
  \left(\int_0^r(X_{\rho\ast}^{-1}A)_x\,dZ_\rho\right)dh_r.
\end{align*}
For the last term it is trivial to observe that
\begin{align*}
  \int_0^s F\bigl(r,X_r(x)\bigr)
  &\left(\int_0^r(X_{\rho\ast}^{-1}A)_x\,dZ_\rho\right)dh_r\\
  &\hskip1cm\mequal F\bigl(s,X_s(x)\bigr)\int_0^s
  \left(\int_0^r(X_{\rho\ast}^{-1}A)_x\,dZ_\rho\right)dh_r.
\end{align*}
Now the idea is to take $h$ of the special form $h_s=C_s(x)^{-1}k_s$ for some
adapted $T_xM$-valued process $k$ locally pathwise of bounded variation such
that in addition $k_\tau=v$ and $k_s=0$ for $s$ close to~$0$.  Then the
remaining problem is to replace
\begin{equation}
  \label{eq:LousyTerm}
  \int_0^sdF(r,\nbull\,)_{X_r(x)}\,X_{r\ast}\,C_r(x)\,dh_r
\end{equation}
modulo local martingales by expressions not involving derivatives of $F$.
This however seems to be difficult in general, but in Section~\ref{Sect7} we show that,
more easily, the expectation of \eqref{eq:LousyTerm} can be rewritten in terms
not involving derivatives of $F$.

\section{Hypoelliptic Diffusions and Control Theory}\label{Sect5}
\setcounter{equation}0\noindent The following two corollaries are immediate
consequences of Theorem \ref{LocMart}.

\begin{cor}
  \label{LocMartPt}
  Let $\map fM\R$ be bounded measurable.  Fix $x\in M$ and $v\in T_xM$.  Then,
  for any predictable $\R^r$-valued process $k$ in $L^2_{\hbox{\sevenrm
      loc}}(Z)$,
  \begin{equation*}
    (dP_{t-\tbull}f)_{X_\bull(x)}\,(T_xX_\tbull)
    \Bigl[v+\!\int_0^\bull (X_{s\ast}^{-1}A)_x k_s\,ds\Bigr]
    -(P_{t-\tbull}f)\bigl(X_\tbull(x)\bigr)\!\int_0^\bull\langle k,dZ\rangle
  \end{equation*}
  is a local martingale on the interval $[0,t\wedge\zeta(x)[$.
\end{cor}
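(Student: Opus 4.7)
The plan is to reduce the statement to Theorem~\ref{LocMart} by choosing the family $F(s,y):=(P_{t-s}f)(y)$ for $0\le s<t$, and then accounting separately for the extra ``$v$''-summand appearing inside the brackets.

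First I would verify the hypotheses of Theorem~\ref{LocMart} for this choice. Under (H1), H\"ormander's theorem guarantees that $P_{t-s}f$ is smooth on $M$ for every $s<t$, with derivative jointly continuous in $(s,y)$ on $[0,t[\,\times M$. By the Markov property of $X_\bullet(x)$ together with the semigroup property of $P_\bullet$, the process $F(s,X_s(x))=(P_{t-s}f)(X_s(x))$ is a uniformly bounded martingale on $[0,t\wedge\zeta(x)[$. Theorem~\ref{LocMart} then immediately yields that
\begin{equation*}
(dP_{t-\bullet}f)_{X_\bullet(x)}\,(T_xX_\bullet)\int_0^\bullet (X_{s\ast}^{-1}A)_x\,k_s\,ds
-(P_{t-\bullet}f)(X_\bullet(x))\int_0^\bullet\langle k,dZ\rangle
\end{equation*}
is a local martingale on $[0,t\wedge\zeta(x)[$.

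It remains to show that the additional summand $s\mapsto (dP_{t-s}f)_{X_s(x)}\,(T_xX_s)\,v$ is itself a local martingale. This is precisely the observation exploited in the alternative proof of Theorem~\ref{LocMart}: the $T_x^\ast M$-valued process $m_s:=dF(s,\,\cdot\,)_{X_s(x)}\,X_{s\ast}$, being the derivative in the starting point of the family of local martingales $F(\,\cdot\,,X_\bullet(x))$, is itself a local martingale (cf.\ \cite{A-T 96}). Evaluating $m_s$ on the fixed vector $v\in T_xM$ produces the real-valued local martingale $\langle m_\bullet,v\rangle=(dP_{t-\bullet}f)_{X_\bullet(x)}\,(T_xX_\bullet)\,v$. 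Summing the two local martingales gives the desired expression.

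The main obstacle I anticipate is justifying the ``derivative of a family of local martingales is a local martingale'' step in the present setting: although $f$ is only bounded measurable, one relies on hypoellipticity to ensure the joint smoothness of $(s,y)\mapsto (P_{t-s}f)(y)$ on $[0,t[\,\times M$ needed to invoke \cite{A-T 96} and to guarantee that both expressions above are well defined up to the explosion time. Once this regularity is secured, the corollary is a direct consequence of Theorem~\ref{LocMart} and of the remark used in its alternative proof.
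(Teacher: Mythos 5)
Your proposal is correct and matches the paper's intent exactly: apply Theorem~\ref{LocMart} with $F(s,y)=(P_{t-s}f)(y)$, and add the extra term $(dP_{t-\tbull}f)_{X_\bull(x)}(T_xX_\tbull)\,v$, which the paper likewise observes (in the remark immediately following the corollary) is a local martingale as the derivative in the starting point of the local martingale $(P_{t-\tbull}f)(X_\tbull(x))$, citing \cite{A-T 96}. Your cautionary note about needing hypoellipticity to secure smoothness of $(s,y)\mapsto(P_{t-s}f)(y)$ for bounded measurable $f$ is also the right thing to flag.
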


Note that $(dP_{t-\tbull}f)_{X_\bull(x)}\,(T_xX_\tbull)\,v$ is a local
martingale as the derivative of the local martingale
$(P_{t-\tbull}f)\bigl(X_\tbull(x)\bigr)$ at $x$ in the direction $v$,
see~\cite{A-T 96}.

\begin{cor}
  \label{LocMartHarmonic}
  Assume that $M$ is compact with nonempty smooth boundary $\partial M$. Let
  $u\in C(M)$ be $L$-harmonic on $M{\setminus}\partial M$.  Fix $x\in
  M{\setminus}\partial M$ and $v\in T_xM$.  Then, for any predictable
  $\R^r$-valued process $k$ in $L^2_{\hbox{\sevenrm loc}}(Z)$,
  \begin{equation*}
    (du)_{X_\bull(x)}\,(T_xX_\tbull)
    \Bigl[v+\!\int_0^\bull (X_{s\ast}^{-1}A)_xk_s\,ds\Bigr]
    -u\bigl(X_\tbull(x)\bigr)\!\int_0^\bull\langle k,dZ\rangle
  \end{equation*}
  is a local martingale on the interval $[0,\tau(x)[$ where $\tau(x)$ is the
  first hitting time of $X_\tbull(x)$ at $\partial M$.
\end{cor}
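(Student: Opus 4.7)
The plan is to reduce the corollary to Theorem~\ref{LocMart} exactly as Corollary~\ref{LocMartPt} does, with $F(s,y):=u(y)$ independent of the time parameter. First I note that under (H1) the operator $L$ is hypoelliptic, so the continuous $L$-harmonic function $u$ is automatically smooth on $M\setminus\partial M$, which justifies talking about its differential $du$ at points of the interior. Next, fix $x\in M\setminus\partial M$ and let $\tau(x)$ be the first hitting time of $\partial M$ by $X_\bull(x)$. Because $u$ is $L$-harmonic on the open set $M\setminus\partial M$, Itô's formula combined with the definition of $L$ in H\"ormander form yields that the stopped process $u\bigl(X_{\bull\wedge\tau(x)}(x)\bigr)$ is a continuous local martingale; equivalently, $F(\,\nbull\,,X_\bull(x))=u(X_\bull(x))$ is a local martingale on the stochastic interval $[0,\tau(x)[$.

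Having this, I apply Theorem~\ref{LocMart} to the family $F(\,\nbull\,,X_\bull(x))$ on $[0,\tau(x)[$; no compactness or global regularity is needed because the theorem's proof is local and $u$ is smooth in the interior. This produces, for every predictable $\R^r$-valued process $k\in L^2_{\hbox{\sevenrm loc}}(Z)$, the local martingale
\begin{equation*}
  (du)_{X_\bull(x)}\,(T_xX_\bull)\int_0^\bull(X_{s\ast}^{-1}A)_x\,k_s\,ds
  -u\bigl(X_\bull(x)\bigr)\int_0^\bull\langle k,dZ\rangle
\end{equation*}
on $[0,\tau(x)[$.

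It remains to handle the extra $v$-term. Since $u(X_\bull(x))$ is a local martingale depending smoothly on the starting point, its derivative at $x$ in direction $v$, namely $(du)_{X_\bull(x)}\,(T_xX_\bull)\,v$, is again a local martingale in $T^\ast_xM$ (valued against $v$), as recorded in \cite{A-T 96} and as already used in the paragraph following Corollary~\ref{LocMartPt}. Adding this local martingale to the previous display gives exactly the expression in the statement, so the sum is a local martingale on $[0,\tau(x)[$, which is the required conclusion.

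The argument is essentially mechanical once Theorem~\ref{LocMart} is in hand; the only point requiring care is the verification that $u\circ X_\bull(x)$ is indeed a local martingale up to $\tau(x)$ despite $u$ being defined only on $M$ and harmonic only on the interior. This is the mild obstacle, but it is handled by the hypoelliptic smoothness of $u$ on $M\setminus\partial M$ together with the fact that $X_\bull(x)$ stays in $M\setminus\partial M$ on $[0,\tau(x)[$.
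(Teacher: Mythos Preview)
Your proof is correct and matches the paper's approach: the paper simply declares both Corollaries~\ref{LocMartPt} and~\ref{LocMartHarmonic} to be ``immediate consequences of Theorem~\ref{LocMart}'', and the remark after Corollary~\ref{LocMartPt} supplies exactly the justification you give for the extra $v$-term via~\cite{A-T 96}. Your added observations about hypoelliptic regularity of $u$ and the local-martingale property of $u\circ X_\bull(x)$ up to $\tau(x)$ are the natural details one fills in, and nothing more is needed.
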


\begin{problem}[Control Problem]
  \label{MainProblem}
  Let $x\in M$ and $v\in T_xM$.  Consider the random dynamical system
  \begin{equation}
    \label{eq:ContrPr}
    \left\lbrace\begin{aligned}
        \dot h_s&=(X_{s\ast}^{-1}A)_x\,k_s\\  
        h_0&=v.
      \end{aligned}\right.
  \end{equation}
  Let $\sigma=\tau_D^\mstrut(x)$, resp., $\sigma=\tau_D^\mstrut(x)\wedge t$
  for some $t>0$, where $\tau_D^\mstrut(x)$ is the first exit time of
  $X_\tbull(x)$ from some relatively compact open neighbourhood $D$ of $x$.
  We are concerned with the problem of finding predictable processes $k$
  taking values in $\R^r$ such that $h_\sigma=0$,~a.s.
\end{problem}

\begin{example}
  \label{EllCase}
  Assume $L$ to be elliptic, i.e., $\map{A(x)}{\R^r}{T_xM}$ surjective for
  each $x\in M$. Then
  \begin{equation*}
    k_s=A^\ast\bigl(X_s(x)\bigr)\,T_xX_s\,\dot h_s
  \end{equation*}
  solves Problem \ref{MainProblem} if the terms are defined as follows:
  $A^\ast(\,\nbull\,)\in\Gamma(T^\ast M\otimes\R^r)$ is a smooth section and
  (pointwise) right-inverse to $A(\,\nbull\,)$,
  i.e.~$A(x)A^\ast(x)=\id_{T_xM}$ for $x\in M$, the process $h$ may be any
  adapted process with values in $T_xM$ and with absolutely continuous sample
  paths (e.g., paths in the Cameron-Martin space $\H(\R_+,T_xM)$) such that
  $h_0=v$ and $h_\sigma=0$,~a.s.  Thus, for elliptic $L$, there are
  ``controls'' $k$ transferring system \eqref {eq:ContrPr} from $v$ to $0$ in
  time $\sigma$, moreover it is even possible to follow prescribed
  trajectories $s\mapsto h_s$ from $v$ to $0$.  In the hypoelliptic case, this
  cannot be achieved in general, since the right-hand side in
  \begin{equation*}
    (T_xX_s)\,\dot h_s=A\bigl(X_s(x)\bigr)\,k_s
  \end{equation*} 
  is allowed to be degenerate.
\end{example}

Under the assumption that Problem \ref{MainProblem} has an affirmative
solution, we get differentiation formulas in a straightforward way.

\begin{thm}
  \label{ThmOne}
  Let $\map fM\R$ be bounded measurable, $x\in M$, $v\in T_xM$, $t>0$.  Let
  $D$ be a relatively compact open neighbourhood of $x$ and
  $\sigma=\tau_D^\mstrut(x)\wedge t$ where $\tau_D^\mstrut(x)$ is the first
  exit time of $X_\tbull(x)$ from $D$.  Suppose there exists an $\R^r$-valued
  predictable process $k$ such that
  \begin{equation*}
    \int_0^\sigma (X_{s\ast}^{-1}A)_x\,k_s\,ds\equiv v,\quad\hbox{a.s.,}
  \end{equation*}
  and $\bigl(\int_0^\sigma\vert k_s\vert^2\,ds\bigr)^{1/2}\in
  L^{1+\varepsilon}$ for some $\varepsilon>0$.  Then
  \begin{equation}
    \label{eq:dSemigroup}
    d(P_tf)_xv
    =\E\biggl[f\bigl(X_t(x)\bigr)\,1_{\{t<\zeta(x)\}}
    \int_0^\sigma\langle k,dZ\rangle\biggr]  
  \end{equation}
  where $P_tf$ is the minimal semigroup defined
  by~{\rm\eqref{eq:SemiGroupMin}}.
\end{thm}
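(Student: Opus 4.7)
The plan is to apply Corollary \ref{LocMartPt} with the driving process $-k$ in place of $k$. This produces the local martingale
\[
  M_s := (dP_{t-s}f)_{X_s(x)}(T_xX_s)\biggl[v - \int_0^s (X_{r\ast}^{-1}A)_x k_r\,dr\biggr]
  + (P_{t-s}f)(X_s(x)) \int_0^s \langle k, dZ\rangle
\]
on $[0, t\wedge\zeta(x)[$. At $s=0$ the second summand vanishes and the first reduces to $d(P_tf)_x v$, because $T_xX_0=\id$. Since $D$ is relatively compact, $\tau_D(x)<\zeta(x)$ a.s., hence $\sigma=\tau_D(x)\wedge t<\zeta(x)$ and $M_\sigma$ is well-defined. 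The crucial point is that the standing hypothesis $\int_0^\sigma (X_{s\ast}^{-1}A)_x k_s\,ds=v$ collapses the bracket at $s=\sigma$ to $0$, so the first summand of $M_\sigma$ vanishes and
\[
  M_\sigma = (P_{t-\sigma}f)(X_\sigma(x)) \int_0^\sigma \langle k, dZ\rangle.
\]

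Once the optional-sampling identity $\E[M_0]=\E[M_\sigma]$ is in hand, the proof closes via the strong Markov property of the minimal semigroup. Since $\sigma\le t$ and $\sigma<\zeta(x)$, one has $(P_{t-\sigma}f)(X_\sigma(x)) = \E\bigl[f(X_t(x))\,1_{\{t<\zeta(x)\}}\mid\SF_\sigma\bigr]$, and since the stochastic integral $\int_0^\sigma\langle k,dZ\rangle$ is $\SF_\sigma$-measurable, conditioning yields
\[
  \E[M_\sigma] = \E\biggl[f(X_t(x))\,1_{\{t<\zeta(x)\}}\int_0^\sigma\langle k,dZ\rangle\biggr],
\]
which is the right-hand side of \eqref{eq:dSemigroup}.

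The main obstacle is upgrading the \emph{local} martingale property of $M^\sigma$ to the genuine identity $\E[M^\sigma_0]=\E[M^\sigma_\sigma]$. Boundedness of $f$ gives $\|P_{t-s}f\|_\infty\le\|f\|_\infty$, and the Burkholder--Davis--Gundy inequality combined with the hypothesis $\bigl(\int_0^\sigma|k_s|^2\,ds\bigr)^{1/2}\in L^{1+\varepsilon}$ places $\int_0^\sigma\langle k,dZ\rangle$ in $L^{1+\varepsilon}$; hence $M_\sigma\in L^{1+\varepsilon}$. To control $M$ along a localizing sequence $(\tau_n)$, one exploits that $X_s$ stays in the compact closure $\overline{D}$ for $s\le\sigma$, so the derivative flow $T_xX_s$ and the associated integrands are uniformly controlled pathwise on any interval strictly away from $t$; the potential blow-up of $dP_{t-s}f$ as $s\uparrow t$ is tamed by the fact that the bracket multiplying it converges to $0$ at $\sigma$. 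A dominated convergence argument applied to $M_{\sigma\wedge\tau_n}\to M_\sigma$ then delivers the desired identity, completing the proof.
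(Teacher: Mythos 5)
Your proof is correct and follows essentially the same route as the paper: apply Corollary~\ref{LocMartPt} (equivalently Theorem~\ref{LocMart}), observe that the resulting process is a uniformly integrable martingale on $[0,\sigma]$, take expectations, and conclude via $(P_{t-\sigma}f)(X_\sigma(x))=\E^{\SF_\sigma}\bigl[f(X_t(x))\,1_{\{t<\zeta(x)\}}\bigr]$. The paper's proof is even terser and refers to Theorem~2.4 of \cite{TH 97} for the uniform-integrability details that you sketch (the BDG bound on $\int_0^\sigma\langle k,dZ\rangle$ and the taming of the blow-up of $dP_{t-s}f$ near $s=\sigma$ by the vanishing bracket); your replacement of $k$ by $-k$ is the correct bookkeeping to make the first summand vanish at $\sigma$ given the sign convention $\int_0^\sigma(X_{s\ast}^{-1}A)_x k_s\,ds=v$ in the theorem statement.
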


\begin{proof}
  It is enough to check that the local martingale defined in Theorem
  \ref{LocMart} is actually a uniformly integrable martingale on the interval
  $[0,\sigma]$.  The claim then follows by taking expectations, noting that
  $(P_{t-\sigma}f)(X_\sigma(x))
  =\E^{\SF_\sigma}\bigl[f\bigl(X_t(x)\bigr)\,1_{\{t<\zeta(x)\}}\bigr]$.  See
  Theorem 2.4 in \cite{TH 97} for technical details.
\end{proof}

Along the same lines, now exploiting Corollary \ref{LocMartHarmonic}, the
following result can be derived.

\begin{thm}
  \label{ThmTwo}
  Let $M$ be compact with smooth boundary $\partial M\not=\emptyset$ and let
  $u\in C(M)$ be $L$-harmonic on $M{\setminus}\partial M$.  Let $x\in
  M{\setminus}\partial M$ and $v\in T_xM$.  Denote $\tau(x)$ the first hitting
  time of $X_\tbull(x)$ at $\partial M$.  Suppose there exists an
  $\R^r$-valued predictable process $k$ such that
  \begin{equation*}
    \int_0^{\tau(x)}(X_{s\ast}^{-1}A)_x^\mstrut\,k_s\,ds\equiv v,\quad\hbox{a.s.,}
  \end{equation*}
  and $\bigl(\int_0^{\tau(x)}\vert k_s\vert^2\,ds\bigr)^{1/2}\in
  L^{1+\varepsilon}$ for some $\varepsilon>0$.  Then the following formula
  holds:
  \begin{equation}
    \label{eq:dHarmonic}
    (du)_xv
    =\E\biggl[u\bigl(X_{\tau(x)}(x)\bigr)
    \int_0^{\tau(x)}\langle k,dZ\rangle\,\biggr].  
  \end{equation}
\end{thm}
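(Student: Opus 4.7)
\emph{Strategy.} The proof runs parallel to that of Theorem~\ref{ThmOne}: apply the local martingale supplied by Theorem~\ref{LocMart} with $F=u$, check uniform integrability on the closed interval $[0,\tau(x)]$, invoke the optional sampling theorem, and use the control hypothesis on $k$ to match terms.

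\emph{Execution.} Since $u\circ X_\tbull(x)$ is a local martingale on $[0,\tau(x)[$, Theorem~\ref{LocMart} applied with $F=u$ and the given predictable $\R^r$-valued process $k$ produces the local martingale
\begin{equation*}
\widetilde N_s = (du)_{X_s(x)}(T_xX_s)\int_0^s(X_{r\ast}^{-1}A)_x k_r\,dr - u(X_s(x))\int_0^s\langle k,dZ\rangle
\end{equation*}
on $[0,\tau(x)[$, with $\widetilde N_0=0$. In addition, as noted right after Corollary~\ref{LocMartHarmonic}, the process $M_s:=(du)_{X_s(x)}(T_xX_s)v$ is itself a local martingale on $[0,\tau(x)[$ --- it is the derivative of $u\circ X_\tbull(x)$ at $x$ in direction $v$ --- with $M_0=(du)_x v$. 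Once $\widetilde N$ and $M$ are upgraded to uniformly integrable martingales on $[0,\tau(x)]$, optional sampling at $\tau(x)$ combined with the hypothesis $\int_0^{\tau(x)}(X_{s\ast}^{-1}A)_xk_s\,ds\equiv v$ yields
\begin{equation*}
0=\E[\widetilde N_{\tau(x)}]=\E\bigl[(du)_{X_{\tau(x)}(x)}(T_xX_{\tau(x)})v\bigr]-\E\Bigl[u(X_{\tau(x)}(x))\int_0^{\tau(x)}\langle k,dZ\rangle\Bigr],
\end{equation*}
together with $\E[M_{\tau(x)}]=(du)_x v$. Combining these two identities is exactly formula~\eqref{eq:dHarmonic}.

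\emph{Main obstacle.} The only substantive work is the uniform-integrability upgrade. Since $M$ is compact and $u\in C(M)$, the function $u$ is bounded; Burkholder--Davis--Gundy combined with the $L^{1+\varepsilon}$-hypothesis on $\bigl(\int_0^{\tau(x)}|k_s|^2\,ds\bigr)^{1/2}$ then controls $u\circ X_\tbull\cdot\int_0^\tbull\langle k,dZ\rangle$ in $L^{1+\varepsilon}$. The genuinely delicate piece is the derivative contribution $(du)_{X_s(x)}(T_xX_s)[\,\nbull\,]$, because $du$ need not extend continuously to $\partial M$ and the derivative flow $T_xX_\tbull$ may grow in~$s$. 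Following the localization scheme of~\cite{TH 97}, I would introduce stopping times $\tau_n\nearrow\tau(x)$ keeping $X_\tbull(x)$ inside a relatively compact subset of $M\setminus\partial M$ on $[0,\tau_n]$ --- there $du$ is bounded by interior hypoelliptic estimates and $T_xX_\tbull\in L^p$ for every $p$ by Remark~\ref{AllLp} --- apply optional sampling at each $\tau_n$ to obtain $\E[\widetilde N_{\tau_n}]=0$ and $\E[M_{\tau_n}]=(du)_x v$, and pass to the limit $n\to\infty$ by dominated convergence. The $L^{1+\varepsilon}$-assumption on $k$ is precisely what makes this interchange legitimate; this boundary-regularity passage is where the technical details of~\cite{TH 97} are invoked.
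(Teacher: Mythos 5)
Your overall strategy—Theorem~\ref{LocMart}, upgrade to a uniformly integrable martingale, optional sampling at the stopping time—is exactly what the paper does, and the reference to~\cite{TH 97} for the technical details is the right one. However, there is a genuine gap in the way you split the bookkeeping into the two separate local martingales $\widetilde N$ and $M$, and it is precisely a boundary issue.

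You write $\E[\widetilde N_{\tau(x)}]=\E\bigl[(du)_{X_{\tau(x)}(x)}(T_xX_{\tau(x)})v\bigr]-\E\bigl[u(X_{\tau(x)}(x))\int_0^{\tau(x)}\langle k,dZ\rangle\bigr]$ and $\E[M_{\tau(x)}]=(du)_xv$. Both formulas evaluate $du$ at $X_{\tau(x)}(x)\in\partial M$, but the hypothesis is only $u\in C(M)$ with $u$ $L$-harmonic on $M\setminus\partial M$: there is no reason for $du$ to extend continuously to the boundary, and in general it does not. Thus $\widetilde N_{\tau(x)}$ and $M_{\tau(x)}$, although they exist as $L^1$-limits once UI is established, cannot be identified termwise with the displayed expressions. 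Your proposed localization $\tau_n\nearrow\tau(x)$ does not repair this on its own: $\E[\widetilde N_{\tau_n}]=0$ and $\E[M_{\tau_n}]=(du)_xv$ are constant in $n$, so dominated convergence says nothing, and what actually needs to be controlled is $\E\bigl[(du)_{X_{\tau_n}(x)}(T_xX_{\tau_n})h_{\tau_n}\bigr]\to 0$ where $h_s=v-\int_0^s(X_{r\ast}^{-1}A)_xk_r\,dr$ is the residual control --- a product of a possibly blowing-up derivative martingale and a vanishing factor $h_{\tau_n}$.

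The point you should exploit is the structure of Corollary~\ref{LocMartHarmonic}: keep the two pieces \emph{combined}. With $h_s=v-\int_0^s(X_{r\ast}^{-1}A)_x k_r\,dr$ (so $h_0=v$ and, by the control hypothesis, $h_{\tau(x)}=0$), the single local martingale
\begin{equation*}
N_s=(du)_{X_s(x)}(T_xX_s)\,h_s+u\bigl(X_s(x)\bigr)\int_0^s\langle k,dZ\rangle
\end{equation*}
satisfies $N_0=(du)_xv$, and its derivative term is multiplied by $h_s\to 0$, so its terminal value is $u(X_{\tau(x)}(x))\int_0^{\tau(x)}\langle k,dZ\rangle$ without ever invoking $du$ on $\partial M$. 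What then remains to prove is precisely the uniform integrability of $N$ on $[0,\tau(x)[$ --- which is the content of Theorem~2.4 in~\cite{TH 97} --- using that $u$ is bounded (so the second term is handled by BDG and the $L^{1+\varepsilon}$-assumption on $(\int_0^{\tau(x)}|k_s|^2\,ds)^{1/2}$) and that $(du)_{X_s(x)}(T_xX_s)$ is a local martingale whose pairing with the vanishing $h_s$ is controlled. This combined formulation is what makes the boundary behaviour harmless; your decomposition obscures it and, taken literally, does not go through.

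A minor additional point: you invoke Remark~\ref{AllLp} for $T_xX_\tbull\in L^p$, but that remark concerns $(\det C_t(x))^{-1}$; $L^p$-bounds for the derivative flow up to a compactly localized stopping time are standard but come from a different estimate.
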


\noindent
In the elliptic case, formulas of type \eqref{eq:dSemigroup} and
\eqref{eq:dHarmonic} have been used in~\cite{T-W 98} to establish gradient
estimates for $P_tf$ and for harmonic functions $u$, see also \cite{Driver-Thalm:2001} 
for extensions from functions to to sections.  
Nonlinear generalizations of the elliptic case, e.g., to harmonic maps and solutions of
the nonlinear heat equations, are treated in~\cite{A-T 97}.

As explained, differentiation formulas may be obtained from the local
martingales \eqref{eq:QuasiDer} by taking expectations if there is a
``control'' $(k_s)$ transferring the system \eqref{eq:ContrPr} from $h_0=v$ to
$h_\sigma=0$.  Solvability of the ``control problem'' is more or less
necessary for this approach, as is explained in the following remark.

\begin{remark}
  \label{QuasiDerivatives}
  Consider the general problem of finding semimartingales $h$, $\Phi$ with
  $h_0=v$ and $\Phi_0=0$ where $h$ is $T_xM$-valued and $\Phi$ real-valued
  such that
  \begin{equation}
    \label{eq:LocMartGeneral}
    n_s=(dF_s)_{X_s(x)}\,X_{s\ast}h_s+F_s(X_s(x))\,\Phi_s,\quad s\geq0  
  \end{equation}
  is a local martingale for any space-time transformation $F$ of the diffusion
  $X(x)$ such that $F_s(X_s(x))\equiv F(s,X_s(x))$ is a local martingale.  In
  the notion of quasiderivatives, as used by Krylov~\cite{KR 93, Krylov:2004}, this means
  that $\xi:=(T_xX)\,h$ is a $F$-quasiderivative for $X$ along $\xi$ at $x$
  and $\Phi$ its $F$-accompanying process.  Suppose that $h$ takes paths in
  the Cameron-Martin space $\H(\R_+,T_xM)$.  Then, by choosing $F\equiv1$, we
  see that $\Phi$ itself should already be a local martingale, say
  $\Phi_s=\int_0^s\langle k_r,dZ_r\rangle$.  Thus
  \begin{equation*}
    n\mequal\int_0^\bull(dF_r)_{X_r(x)}\,X_{r\ast}\,dh_r+
    \int_0^\bull(dF_r)_{X_r(x)}\,A(X_r(x))k_r\,dr
  \end{equation*}
  which implies
  \begin{equation*}\int_0^\bull(dF_r)_{X_r(x)}\,X_{r\ast}\,dh_r+
    \int_0^\bull(dF_r)_{X_r(x)}\,A(X_r(x))k_r\,dr\equiv0,
  \end{equation*}
  i.e., $(dF_s)_{X_s(x)}\,X_{s\ast}\,\dot h_s+
  (dF_s)_{X_s(x)}\,A(X_s(x))k_s\equiv0$ for all $F$ of the above type.  Hence,
  assuming local richness of transformations $F$ of this type, we get for
  $s\geq0$,
  \begin{equation*}
    X_{s\ast}\,\dot h_s+A(X_s(x))k_s\equiv0\end{equation*}
  or 
  \begin{equation*}\dot h_s+(X_{s\ast}^{-1}A)_x\,k_s=0.
  \end{equation*}
  which means that $k$ solves the ``control problem''.
\end{remark}

Coming back to Problem \ref{MainProblem} we note that since the problem is
unaffected by changing $M$ outside of $D$, we may assume that $M$ is already
compact.  It is also enough to deal with the case $\sigma=\tau_D^\mstrut(x)$
where $D$ has smooth boundary.  

\begin{problem}[Modified Control Problem]
  \label{MainProblem1}Let
\begin{equation*}
  c_s(x)={d\over ds}C_s(x)
  =\sum_{i=1}^r(X_{s\ast}^{-1}A_i)_x^\mstrut\otimes(X_{s\ast}^{-1}A_i)_x^\mstrut.
\end{equation*}
Confining the consideration to $\R^r$-valued processes $k$ of the special form
\begin{equation}
  \label{eq:SpecialControl}
  k_s=\sum_{i=1}^r\bigl\langle(X_{s\ast}^{-1}A_i)_x^\mstrut,u_s\bigr\rangle\,e_i  
\end{equation}
for some adapted $T_xM$-valued process $u$, we observe that Problem~\ref{MainProblem} 
reduces to finding predictable $T_xM$-valued processes $u$ such that
\begin{equation}
  \label{eq:SpecialControl1}
  \left\lbrace\begin{aligned}
      \dot h_s&=c_s(x)\,u_s\\  
      h_0&=v\quad\hbox{and}\quad h_\sigma=0.
    \end{aligned}\right.
\end{equation}
\end{problem}

This Problem \ref{MainProblem1}, as well as Problem \ref{MainProblem}, have an affirmative solution
in many cases.  However, in the general situation, both problems are not
solvable under hypothesis (H1), as will be shown in the next section.

\section{Solvability of the control problem: Examples and counterexamples}\label{Sect6}
\setcounter{equation}0\noindent

We start discussing an example with solvability of the control conditions in a
non-elliptic situation.
\begin{example}\rm
  \label{ExampleR2}
  Let $M=\R^2$ and $A_0\equiv 0$, $A_1(x)=(1,0)$, $A_2(x)=(0,x_1)$.  Then
  $[A_1,A_2](x)=(0,1)$.  The solution to
  \begin{equation*}
    \delta X=A(X)\,\delta Z
  \end{equation*}
  starting from $x=(x^1,x^2)$ is given by
  \begin{equation*}
    X_t(x) = \left(x^1+Z_t^1,x^2+x^1Z_t^2+\int_0^tZ_s^1\,dZ_s^2\right).
  \end{equation*}
  Consequently
  \begin{equation*}
    \bigl(X_{s\ast}^{-1}A\bigr)(x)=\left(
      \begin{matrix} 
        1&0\\-Z_s^2&X_s^1
      \end{matrix}
    \right),
  \end{equation*}
  and the control problem at $x=0$ comes down to finding $k$ such that
  \begin{equation*}
    \dot h_s=\left(
      \begin{matrix} 
        1&0\\-Z_s^2&Z_s^1
      \end{matrix}\right)k_s,\quad h_0=v,\ h_\sigma=0,
  \end{equation*}
  and $\Bigl(\int_0^\sigma\vert k_s\vert^2\,ds\Bigr)^{1/2}\in
  L^{1+\varepsilon}$.  We may assume that $\vert v\vert=1$, and will further
  assume that $\sigma=\tau_D$ or $\sigma=\tau_D\wedge t$ where $D$ is some
  relatively compact neighbourhood of the origin in $\R^2$.  (After possibly
  shrinking $D$, we may also assume that $D$ is open with smooth boundary.)
  Note that
  \begin{equation*}
    c_s(0)=\bigl(X_{s\ast}^{-1}A\bigr)_0\,\bigl(X_{s\ast}^{-1}A\bigr)_0^\ast=
    \left(
      \begin{matrix} 
        1&\!\!\!\!-Z_s^2\\-Z_s^2&\vert Z_s\vert^2
      \end{matrix}
    \right).
  \end{equation*}
  Thus if $\lambda_{\text{\rm min}}(s)$ denotes the smallest eigenvalue of
  $c_s(0)$, then
  \begin{equation}
    \label{eq:LambdaMinBound}
    \lambda_{\text{\rm min}}(s)\geq
    \frac{(Z_s^1)^2}{1+\vert Z_s\vert^2}.
  \end{equation}
(Indeed,
let $a:=Z_s^1$, $b:=Z_s^2$, and $x:=1+|Z_s|^2=1+a^2+b^2$; then
$$\lambda_{\text{min}}(s)=\frac{x-\sqrt{x^2-4a^2}}2=\frac x2\left[1-\sqrt{1-\frac{4a^2}{x^2}}\right]\geq
\frac{a^2}x,
$$
where we used $1-\sqrt{1-x}\geq x/2$).

  We construct $h$ by solving the equation
  \begin{equation}
    \label{eq:hsDefinition}
    \dot h_s=-\varphi^{-2}(X_s,Z_s)\,c_s(0)\,{h_s\over\vert h_s\vert},\quad h_0=v,
  \end{equation}
  where $X_s=X_s(0)$ and $\varphi$ is chosen in such a way that
  \begin{equation*}
    \sigma':=\inf\{s\geq0: h_s=0\}\leq\sigma. 
  \end{equation*}
  More precisely, take $\varphi_1\in C^2(\bar D)$ with
  $\varphi_1\vert{\partial D}=0$ and $\varphi_1>0$ in $D$.  Similarly, for
  some large ball $B$ in $\R^2$ about $0$ (containing $D$), let $\varphi_2\in
  C^2(\bar B)$ with $\varphi_2\vert{\partial B}=0$ and $\varphi_2>0$ in $B$.
  Let $\varphi(x,z):=\varphi_1(x)\varphi_2(z)$.  We only deal with the case
  $\sigma=\tau_D$, the case $\sigma=\tau_D\wedge t$ is dealt with an obvious
  modification of \eqref{eq:hsDefinition}.  Now, arguing as in the elliptic
  case, one shows
  \begin{equation*}
    \int_0^\sigma\varphi^{-2}(X_s,Z_s)\,ds=\infty,\quad\text{a.s.}
  \end{equation*}
  Consequently, since $Z^1_\sigma\not=0$ with probability $1$, we may conclude
  that also
  \begin{equation*}
    \int_0^\sigma\varphi^{-2}(X_s,Z_s)\,\frac{(Z_s^1)^2}{1+\vert Z_s\vert^2}\,ds=\infty,
    \quad\text{a.s.}
  \end{equation*}
  Note that
  \begin{equation*}
    \frac{d}{ds}\vert h_s\vert=\frac{\langle\dot h_s,h_s\rangle}{\vert h_s\vert}
    =\frac{-\varphi^{-2}(X_s,Z_s)\,\langle c_s(0)h_s,h_s\rangle}{\vert h_s\vert^2},
  \end{equation*}
  and hence by means of \eqref{eq:LambdaMinBound},
  \begin{equation*}
    1-\vert h_t\vert\geq \int_0^t\varphi^{-2}(X_s,Z_s)\,\lambda_{\text{\rm min}}(s)\,ds
    \geq \int_0^t\varphi^{-2}(X_s,Z_s)\,\frac{(Z_s^1)^2}{1+\vert Z_s\vert^2}\,ds
  \end{equation*}
  which shows in particular that
  \begin{equation*}
    \sigma'\leq \inf\left\{t\geq0: 
      \int_0^t\varphi^{-2}(X_s,Z_s)\,\frac{(Z_s^1)^2}{1+\vert Z_s\vert^2}\,ds=1\right\}.
  \end{equation*}
  It remains to verify the integrability condition, i.e.,
  $\Bigl(\int_0^{\sigma'}\vert k_s\vert^2\,ds\Bigr)^{1/2}\in
  L^{1+\varepsilon}$ where
  \begin{equation*}
    k_s=-\varphi^{-2}(X_s,Z_s)\,\bigl(X_{s\ast}^{-1}A\bigr)_0^\ast\,
    {h_s\over\vert h_s\vert}.
  \end{equation*}
  But, since on the interval $[0,\sigma]$ the Brownian motion $Z$ stays in a
  compact ball $B$, and thus
  \begin{equation*}
    \left\vert\bigl(X_{s\ast}^{-1}A\bigr)_0^\ast\,{h_s\over\vert h_s\vert}\right\vert
    \leq C
  \end{equation*}
  for some constant $C$, we are left to check
  \begin{equation*}
    \Bigl(\int_0^{\sigma'}\varphi^{-4}(X_s,Z_s)\,ds\Bigr)^{1/2}\in L^{1+\varepsilon}
  \end{equation*}
  which is done as in the elliptic case.
\end{example}

Contrary to Example \ref{ExampleR2} the next example gives a negative result
showing that in general Problem \ref{MainProblem} is not always solvable.

\begin{example}\rm
  \label{Picard}
  (J.~Picard) Let $M=\R^3$ and take
  \begin{equation*}
    A_0(x)=(0,0,0),\ A_1(x)=(1,0,0),\ A_2(x)=(0,1,x^1)
  \end{equation*} 
  which obviously satisfy (H1).  Then SDE \eqref{eq:StratonovichSDE} reads as
  \begin{equation*}
    X_t(x)=x+\left(Z_t^1,\, Z_t^2,\,x^1Z_t^2+\int_0^tZ_s^1\,dZ_s^2\right).
  \end{equation*}
  In particular,
  \begin{equation*}
    (X_{t\ast}^{-1}A_1)(0)=\bigl(1,0,-Z_t^2\bigr),\quad
    (X_{t\ast}^{-1}A_2)(0)=\bigl(0,1,Z_t^1\bigr).
  \end{equation*}
  Thus \eqref{eq:ContrPr} is given by
  \begin{equation*}
    \dot h_s=\bigl(k_s^1,\,k_s^2,\,Z_s^1k_s^2-Z_s^2k_s^1\bigr)
  \end{equation*}
  where the problem is to find $h$ such that $h_0=v=(v^1,v^2,v^3)$ and
  $h_\sigma=0$. By extracting the third coordinate, we get $\int_0^\sigma
  Z_s^1k_s^2\,ds-\int_0^\sigma Z_s^2k_s^1\,ds=-v^3$.  On the other hand, an
  integration by parts yields
  \begin{equation*}
    \int_0^\sigma Z_s^2k_s^1\,ds-\int_0^\sigma Z_s^1k_s^2\,ds
    =-\int_0^\sigma h_s^1\,dZ_s^2+\int_0^\sigma h_s^2\,dZ_s^1
  \end{equation*} 
  where the condition on the integrability of $k$ implies that $-\int_0^\sigma
  h_s^1\,dZ_s^2+\int_0^\sigma h_s^2\,dZ_s^1$ is~$L^1$ with expectation equal
  to $0$.  Combining both facts, we conclude that there is no solution
  satisfying the integrability condition if $v^3\not=0$.
\end{example}

Note that if $\sigma$ is not in $L^1$, then the condition on the integrability
of $k$ does not imply any more that $\int_0^\sigma h_s^1\,dZ_s^2+\int_0^\sigma
h_s^2\,dZ_s^1$ is in $L^1$.

\begin{remark}\rm
  \label{LambdaMin}
  In Example \ref{Picard} Malliavin's covariance is explicitly given by
  \begin{align*}
    \bigl\langle C_t(0)u,u\bigr\rangle
    &=\sum_{i=1}^2\int_0^t\bigl\langle(X_{r\ast}^{-1}A_i)(0),u\bigr\rangle^2\,dr\\
    &=\int_0^t\bigl[\bigl(u^1-u^3Z_r^2\bigr)^2+\bigl(u^2+u^3Z_r^1\bigr)^2\bigr]\,dr.
  \end{align*}
  Of course, $C_t(0)-C_s(0)=\int_s^tc_r(0)\,dr$ is non-degenerate for all
  $s<t$, nevertheless $\lambdamin c_s(0)=0$ for each fixed $s$, indeed:
  \begin{equation*}
    \bigl\langle c_s(0)u,u\bigr\rangle 
    =(u^1-u^3Z_s^2\bigr)^2+\bigl(u^2+u^3Z_s^1)^2,\quad u\in T_0M.
  \end{equation*}
\end{remark}

The negative result of example \ref{Picard} depends very much on the fact that
$\sigma=\sigma_D$ is the first exit time of the diffusion from a relatively
compact neighbourhood of its starting point. The situation changes completely
if we allow arbitrarily large stopping times $\sigma$ (not necessarily exit
times from compact sets).

In the remainder of this section we give sufficient conditions for solvability 
of the control problem. We assume that diffusions with generator $L$ have
infinite lifetime, but do no longer assume that the stopping time $\sigma$ 
is of a given type. 
The question whether in this situation, given solvability of the control problem, 
the local martingales defined in Theorem \ref{LocMart} are still uniformly 
integrable martingales, needs to be checked from case to case. 

We consider the following two conditions:
\begin{condition1}
  There exists a positive constant $\alpha$ such that for any continuous (non
  necessarily adapted) process $u_t$, taking values in $\{w\in T_xM,\ \|w\|=1\}$
  and converging to $u$ almost surely,
\begin{equation}
    \label{eq:Cond}
\int_0^\infty\langle c_s(x)u_s,u_s\rangle\1_{\{\cos(
  c_s(x)u_s,u_s)>\alpha\}}\,ds=\infty\quad \hbox{a.s.}
  \end{equation}
\end{condition1}

\begin{condition2}
  There exists a positive constant $\alpha$ such that for any $u_0\in \{w\in
  T_xM,\ \|w\|=1\}$, there exists a neighbourhood $V_{u_0}$ of $u_0$ in
  $\{w\in T_xM,\ \|w\|=1\}$, such that
  \begin{equation}
    \label{eq:Strongcond}
    \int_0^\infty\inf_{u\in V_{u_0}}\left(\langle c_s(x)u,u\rangle\1_{\{\cos(
        c_s(x)u,u)>\alpha\}}\right)\,ds=\infty\quad \hbox{a.s.}
  \end{equation}
\end{condition2}

The following result is immediate:

\begin{prop}
  \label{S1}
  Condition \textup{(C2)} implies Condition \textup{(C1)}.
\end{prop}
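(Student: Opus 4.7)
The plan is to leverage compactness of the unit sphere $S_x=\{w\in T_xM:\|w\|=1\}$ in order to convert the local-in-$u_0$ divergence promised by (C2) into a single almost-sure statement that applies to an arbitrary continuous process $u_s\to u$. The one genuinely delicate point is that in (C1) the limit $u$ is itself random, so one cannot apply (C2) with the deterministic choice $u_0=u$; a finite subcover argument circumvents this.

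Concretely, assuming (C2) holds with constant $\alpha>0$, for each $u_0\in S_x$ take the open neighborhood $V_{u_0}\subset S_x$ provided by (C2) and, by compactness, extract a finite subcover $V_1,\ldots,V_N$ of $S_x$. For each $j$ the event
\[
\Omega_j:=\Bigl\{\int_0^\infty\inf_{u\in V_j}\bigl\langle c_s(x)u,u\bigr\rangle\1_{\{\cos(c_s(x)u,u)>\alpha\}}\,ds=\infty\Bigr\}
\]
has full probability, hence so does $\Omega':=\bigcap_{j=1}^N\Omega_j$. I then claim that (C1) holds with the same $\alpha$ on $\Omega'\cap\Omega_0$, with $\Omega_0$ the full-measure set on which the flow is smooth in its spatial argument so that, in particular, $s\mapsto c_s(x)$ is continuous.

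Fix $\omega$ in this set and a continuous process $u_s$ on $S_x$ with $u_s(\omega)\to u(\omega)$. Since $u(\omega)\in S_x$, it lies in some $V_j$; because $V_j$ is open, there is a random time $T(\omega)$ with $u_s(\omega)\in V_j$ for all $s\geq T(\omega)$. For such $s$ the trivial pointwise bound
\[
\bigl\langle c_s(x)u_s,u_s\bigr\rangle\1_{\{\cos(c_s(x)u_s,u_s)>\alpha\}}\geq\inf_{u\in V_j}\bigl\langle c_s(x)u,u\bigr\rangle\1_{\{\cos(c_s(x)u,u)>\alpha\}}
\]
holds because $u_s$ is itself one of the competitors in the infimum. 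Continuity of $s\mapsto c_s(x,\omega)$ makes the right-hand side bounded on $[0,T(\omega)]$, so the integral of the infimum over that interval is finite; combined with $\omega\in\Omega_j$, this forces the tail integral from $T(\omega)$ to $\infty$ to be infinite, and integrating the pointwise inequality over $[T(\omega),\infty)$ yields (C1).

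Apart from the random-limit issue handled by the finite subcover, the remainder of the argument reduces to monotonicity of the integral together with continuity of $c_s(x)$ on compact time intervals, so no further obstacle is to be expected.
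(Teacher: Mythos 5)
Your argument is correct, and it supplies the details that the paper leaves implicit (the paper merely states the implication is ``immediate'' and gives no proof). The one genuinely nontrivial step is exactly the one you isolate: the limit $u$ in (C1) is random, so one cannot apply (C2) at the single deterministic point $u_0=u$. Your finite subcover of the compact unit sphere $\{w\in T_xM:\|w\|=1\}$ by the neighbourhoods $V_{u_0}$ from (C2), followed by intersection of the finitely many full-measure events $\Omega_j$, is the right way to handle this; then for $\omega$ in that intersection and $s\geq T(\omega)$ one has $u_s(\omega)\in V_j$, the pointwise comparison with $\inf_{u\in V_j}(\cdot)$ holds trivially, and the tail of the integral is forced to diverge because the integrand is nonnegative and continuity of $s\mapsto c_s(x)$ on the compact interval $[0,T(\omega)]$ makes the head finite. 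This is a complete and correct proof, and since the paper offers no proof there is nothing to compare it against except to say the route you chose is the natural one.
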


Now we prove that the control problem is solvable under condition (C1).

\begin{prop}
  \label{S2}
  Under Condition \textup{(C1)}, the control problem is solvable. More precisely,
  considering the random dynamical system
  \begin{equation}
    \label{eq:ContrPr2}
    \left\lbrace\begin{aligned}
        \dot h_s&=(X_{s\ast}^{-1}A)_x\,k_s\\  
        h_0&=v.
      \end{aligned}\right.
  \end{equation}
  there exists a (non necessarily finite) stopping time $\sigma$ and a
  predictable $\R^r$-valued process $k\in L^2(Z)$ such that the process $h$
  given by~\eqref{eq:ContrPr2} satisfies $h_\sigma=0$, a.s.
\end{prop}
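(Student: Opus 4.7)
The plan is to construct $k$ via a gradient-descent control in the spirit of Example~\ref{ExampleR2}, but with a smooth cutoff replacing the role of the boundary-repelling factor $\varphi^{-2}$. Fix a $C^\infty$ function $\eta\colon\R\to[0,1]$ with $\eta\equiv 0$ on $(-\infty,\alpha/2]$ and $\eta\equiv 1$ on $[\alpha,\infty)$. Writing $u_s := h_s/|h_s|$ and $\theta_s := \cos\bigl(c_s(x)u_s,u_s\bigr)$ (set to $0$ when $c_s(x)u_s = 0$), take
\[
k_s := -\eta(\theta_s)\,(X_{s\ast}^{-1}A)_x^\ast\,u_s \quad\text{on } [0,\sigma),\qquad k_s := 0 \quad\text{on } [\sigma,\infty),
\]
where $\sigma := \inf\{s\ge 0: h_s = 0\}$. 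The resulting ODE $\dot h_s = -\eta(\theta_s)\,c_s(x)u_s$ has a continuous right-hand side (the apparent singularity at $h_s = 0$ is harmless, since $(c,u)\mapsto\eta(\cos(cu,u))\,cu$ extends continuously by $0$ across $cu=0$). Routine calculations yield $(d/ds)|h_s| = -\eta(\theta_s)\langle c_s(x)u_s,u_s\rangle\le 0$ and $|k_s|^2 = \eta(\theta_s)^2\langle c_s(x)u_s,u_s\rangle$; thus $|h_s|$ is non-increasing, $\int_0^\sigma|k_s|^2\,ds\le|v|<\infty$, and $h_\sigma = 0$ as soon as $\sigma<\infty$.

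The core of the argument is to show $P(\sigma=\infty)=0$. Supposing $P(\sigma=\infty)>0$, on this event $|h_s|\searrow L$ for some random $L>0$, and
\[
\int_0^\infty \eta(\theta_s)\langle c_s(x)u_s,u_s\rangle\,ds \;=\; |v| - L \;<\;\infty.
\]
Wherever $\eta(\theta_s)>0$ one has $\theta_s>\alpha/2$, i.e.\ $|c_s(x)u_s|\le(2/\alpha)\langle c_s(x)u_s,u_s\rangle$, so $\int_0^\infty|\dot h_s|\,ds\le(2/\alpha)(|v|-L)<\infty$ on $\{\sigma=\infty\}$. Consequently $h_s$ converges to some $h_\infty$ with $|h_\infty|=L>0$, and hence $u_s\to h_\infty/L$ a.s.\ on this event. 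Extending $u_s$ to a continuous unit-norm a.s.-convergent process on all of $\Omega$ (say $u_s\equiv e_1$ off $\{\sigma=\infty\}$), Condition~\textup{(C1)} gives $\int_0^\infty\langle c_s(x)u_s,u_s\rangle\,\1_{\{\theta_s>\alpha\}}\,ds=\infty$ a.s. But $\1_{\{\theta_s>\alpha\}}\le\eta(\theta_s)$, so on $\{\sigma=\infty\}$ this integral is bounded above by $|v|-L<\infty$ — a contradiction.

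The genuinely subtle step — and the one driving the design of the control — is ensuring that Condition~\textup{(C1)} can actually be invoked, i.e.\ that the direction $u_s$ admits an a.s.\ limit on the bad event. The cutoff $\eta$ is tuned to reconcile two opposing demands: the inequality $\eta(\theta)\ge\1_{\{\theta>\alpha\}}$ forces the Condition~\textup{(C1)} integral into the finite length budget $|v|-L$, while the vanishing of $\eta$ on $\{\theta\le\alpha/2\}$ renders $\eta(\theta_s)\,|c_s(x)u_s|$ absolutely integrable and thereby pins down the limits of $h_s$ and~$u_s$.
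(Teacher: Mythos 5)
Your proof is correct and follows the same core strategy as the paper's: steer $h$ by gradient descent along $-c_s(x)u_s$, but only when the descent direction $c_s(x)u_s$ is well aligned with $u_s$ (cosine above a threshold), so that the total variation of $h$ stays bounded; then $h_s$ and $u_s=h_s/\|h_s\|$ converge, and Condition~(C1), applied to the resulting continuous (non-adapted) a.s.-convergent direction process, rules out a nonzero limit. The one structural difference is in how the descent is gated. The paper uses a bang-bang control with hysteresis: it builds an explicit increasing sequence of stopping times $(T_n)$ at which the control switches fully on (when $\cos(c_t(x)h_t,h_t)$ rises past $\alpha$) or fully off (when it falls below $\alpha/2$), producing a piecewise-defined $h$ and a $\{0,1\}$-valued gate $\varphi_s$. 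You replace the switching by a fixed smooth cutoff $\eta$ sandwiched between $\1_{\{\theta>\alpha\}}$ and $\1_{\{\theta>\alpha/2\}}$, so there is a single ODE with continuous right-hand side and no switching times to manage. This is a modest but genuine simplification: it sidesteps any concern about the switching times accumulating (the hysteresis gap in the paper is implicitly there to prevent Zeno behavior, but the paper does not dwell on it), while the two key estimates --- $\int_0^\sigma\|k_s\|^2\,ds\le\|v\|$ from $\eta\le 1$, and $\int_0^\sigma\|\dot h_s\|\,ds\le(2/\alpha)\|v\|$ from $\eta(\theta)>0\Rightarrow\theta>\alpha/2$ --- come out just as cleanly, and the inequality $\1_{\{\theta>\alpha\}}\le\eta(\theta)$ is exactly what is needed to feed Condition~(C1). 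The contradiction on $\{\sigma=\infty\}$ is then identical to the paper's.
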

\begin{proof}

  We look for a solution of the control problem satisfying an equation of the
  type
  \begin{equation}
    \label{eq:Ctrlvarphi}
    \dot h_s=-\varphi_s\frac{1}{\|h_s\|} c_s(x)h_s
  \end{equation}
  with $c_s(x)u=\sum_{i=1}^r
  (X_{s\ast}^{-1}A_i)_x\langle(X_{s\ast}^{-1}A_i)_x,u\rangle$, and where
  $\varphi_s$ takes its values in $\{0,1\}$.

Assuming that (C1) is satisfied, we construct a sequence of 
stopping times $(T_n)_{n\ge0}$ and a continuous
process $h$ inductively as follows:
  \begin{itemize}
  \item[(i)] $T_0=0$;
  \item[(ii)] for $n\ge 0$, if $h_{T_{2n}}=0$, then
    $T_{2n+2}=T_{2n+1}=T_{2n}$.
  \item[(iii)] for $n\ge 0$, if $h_{T_{2n}}\not=0$, $h_t$ is constant on
    $[T_{2n},T_{2n+1}]$ where
$$T_{2n+1}=\inf\{t>T_{2n},\ \cos( c_t(x)h_{T_{2n}},h_{T_{2n}})>\alpha\},$$
and $h_t$ solves
\begin{equation*}
  \dot h_s=-\frac{1}{\|h_s\|}c_s(x)h_s\quad\text{on $[T_{2n+1},T_{2n+2}]$} 
\end{equation*}
where $T_{2n+2}=\inf\{t>T_{2n+1},\ \cos(
c_t(x)h_t,h_t)<\alpha/2\hbox{ or }h_t=0\}$.
\end{itemize}
Let
\begin{equation*}
  \sigma=\inf\{t>0,\ h_t=0\}\quad (=\infty\hbox{ if this set is empty}),
\end{equation*}
and for $s<\sigma$,
\begin{equation*}
  \varphi_s=\1_{\cup_n[T_{2n+1},T_{2n+2}[}(s),
\end{equation*}
\begin{equation}
  \label{eq:Defk}
  k_s=-\varphi_s\frac{1}{\|h_s\|}\sum_{i=1}^r\langle(X_{s\ast}^{-1}A_i)_x,
  h_s\rangle e_i,
\end{equation}
where $(e_1,\ldots, e_r)$ denotes the canonical basis of $\R^r$.  Then $h_t$ solves
Eq.~\eqref{eq:Ctrlvarphi}, $\dot h_s=(X_{s\ast}^{-1}A)_xk_s$, and since
$$
\|k_s\|^2=-\varphi_s\left\langle \dot
  h_s,\frac{h_s}{\|h_s\|}\right\rangle=-\frac{d}{ds}\|h_s\|,
$$
we have
\begin{equation}
  \label{eq:Bdintksqr}
  \int_0^\sigma\|k_s\|^2\,ds\le \|h_0\|.
\end{equation}
To conclude it is sufficient to prove that solutions $h_t$
satisfy $\lim_{s\to\sigma}h_s=0$.

First we remark that $h_t$ converges almost surely as $t$ tends to
$\sigma$. This is due to the fact that
$$
\|dh\|=\frac{d\|h\|}{\cos(h,dh)}=-\frac{d\|h\|}{\cos(h,c_s(x)h)}\le
-\frac{2}{\alpha}\,d\|h\|
$$
(recall $d\|h\|\le 0$); hence $h$ has a total variation bounded by
${2\|h_0\|}/{\alpha}$.

We define $u_t={h_0}/{\|h_0\|}$ on the set where $h_t$ converges to $0$ as
$t$ tends to $\sigma$, and $u_t={h_t}/{\|h_t\|}$ on the set where $h_t$
does not converge to $0$.  This provides a process which converges as $t$ tends to
$\sigma$, but which is not adapted.  On the set where $h_t$ does not converge to
$0$, we have
$$
\|h_0\|\ge-\int_0^\sigma d\|h\|\ge\int_0^\infty \langle
c_s(x)u_s,u_s\rangle\1_{\{\cos( c_s(x)u_s,u_s)>\alpha\}}\,ds,
$$
which implies, by Condition (C1), that this set has probability $0$.
\end{proof}

\begin{example}
  Consider again Example \ref{Picard}, with $M=\R^3$,
  \begin{equation*}
    A_0(x)=(0,0,0),\ A_1(x)=(1,0,0),\ A_2(x)=(0,1,x^1).
  \end{equation*}
For $u\in T_0M$, $\Vert u\Vert=1$, 
we have 
\begin{equation*}
\langle c_s(0)u,u\rangle=(u^1-u^3Z_s^2)^2+(u^2+u^3Z_s^1)^2
\end{equation*} 
and 
\begin{equation*}
\cos(c_s(0)u,u)=\frac{(u^1-u^3Z_s^2)^2+(u^2+u^3Z_s^1)^2}{\left((u^1-u^3Z_s^2)^2+(u^2+u^3Z_s^1)^2
+(-Z_s^2u^1+Z_s^1u^2+\|Z_s\|^2u^3)^2\right)^{1/2}}.
\end{equation*}
From there it is straightforward to verify that condition (C2) is realized in this case. 
With Proposition~\ref{S2} we
obtain condition (C1), and with Proposition~\ref{S1} we get 
solvability of the control problem. We stress again that now we allow $\sigma$ 
to be arbitrarily large. 
Then, contrary to the negative result of Example \ref{Picard}, 
we are able to find $h$ such that $h_0=v$, $h_\sigma=0$, $\dot
  h_s=\bigl(k_s^1,k_s^2,Z_s^1k_s^2-Z_s^2k_s^1\bigr)$, and
  $\int_0^\sigma\vert k_s\vert^2\,ds\in L^{1}$.
\end{example}

\section{Derivative Formulas in the Hypoelliptic Case}\label{Sect7}
\setcounter{equation}0\noindent In this section the results of the Sections~\ref{Sect3}
and \ref{Sect4} are extended to derive general differentiation formulas for heat
semigroups and $L$-harmonic functions in the hypo\-elliptic case.

Let again $F(\,\nbull\,,X_\tbull(x))$, $x\in M$ be a family of local
martingales where the transformation $F$ is differentiable in the second
variable with a derivative jointly continuous in both variables.  We fix $x\in
M$ and $v\in T_xM$.  Let $\sigma$ be a stopping time which is dominated by the
first exit time of $X_\tbull(x)$ from some relatively compact neighbourhood
of~$x$.  We first note that
\begin{equation}
  \label{eq:Equality}
  dF(0,\nbull\,)_xv\equiv
  \E\left[dF(\sigma,\nbull\,)_{X_\sigma(x)}\,X_{\sigma\ast}\,v\right]
\end{equation}
where $X_{\sigma\ast}$ is the derivative process at the random time $\sigma$.
Eq.~\eqref{eq:Equality} follows from the fact that the local martingale
$F(\,\nbull\,\,,X_\tbull(x))$, differentiated in the direction $v$ at~$x$, is
again a local martingale, and under the given assumptions a uniformly
integrable martingale when stopped at $\sigma$.  Our aim is to replace the
right-hand side of~\eqref{eq:Equality} by expressions not involving
derivatives of $F$.  To this end the local martingales of Section~\ref{Sect4} are
exploited.

We start with an elementary construction.  Let $D\subset M$ be a nonempty
relatively compact domain and $\varphi\in C^2(\ovl D)$ such that
$\varphi|\partial D=0$ and $\varphi>0$ on $D$.  For $x\in D$ let
\begin{equation}
  \label{eq:DefTr}
  T(s)=\int_0^s\varphi^{-2}\bigl(X_r(x)\bigr)\,dr\,,\quad s\le\tau_D(x),
\end{equation}
and
\begin{equation}
  \label{eq:Defsr}
  \sigma(r)=\inf\bigl\{s\geq0: T(s)\ge r\bigr\}\le\tau_D(x).
\end{equation}
Note that $T(r)\to \infty$ as $r\nearrow\tau_D(x)$, almost surely,
see~\cite{T-W 98}.  Fix $t_0>0$ and consider
\begin{equation}
  \label{eq:CMprocess}
  \ell_s=\frac1{t_0}\,
  \rho\left(\int_0^s\varphi^{-2}\bigl(X_r(x)\bigr)\,dr\right)v
\end{equation}
for some $\rho\in C^1(\R_+,\R)$ such that $\rho(s)=0$ for $s$ close to $0$ and
$\rho(s)=t_0$ for $s\geq t_0$.  Then $\ell_0=0$ and $\ell_s=v$ for
$s\ge\sigma(t_0)$.
 
Now for perturbations $X^\lambda$ of $X$, as in Section~\ref{Sect3}, let
\begin{equation*}
  \ell^\lambda_s=\frac1{t_0}\,
  \rho\left(\int_0^s\varphi^{-2}\bigl(X^\lambda_r(x)\bigr)\,dr\right)v
\end{equation*}
and $\sigma^\lambda(r)=\inf\bigl\{s\geq0: T^\lambda(s)\ge r\bigr\}$.  We
introduce the abbreviation $\partial_\lambda^\mstrut
=\bigl(\frac\partial{\partial\lambda_1},\dots,
\frac\partial{\partial\lambda_n}\bigr)$.  Then
$\partial_\lambda^\mstrut\bigl\vert_{\lambda=0}\ell^\lambda_s$ exists and lies
in $\cap_{p>1}L^p$, see~\cite{T-W 98}, Section~\ref{Sect4} (the arguments there before
Theorem~4.1 extend easily to general exponents $p$).  In a similar way, using
$T^\lambda\circ\sigma^\lambda=\id$, we see that
\begin{equation*}
  \partial_\lambda^\mstrut\bigl\vert_{\lambda=0}\sigma^\lambda
  =-\frac1{T'\circ\sigma}\,
  \Bigl(\partial_\lambda^\mstrut\bigl\vert_{\lambda=0}T^\lambda\Bigr)\circ\sigma.
\end{equation*}

For our applications, it is occasionally useful to modify the above
construction such that already $\ell_s=v$ for $s\ge\sigma(t_0)\wedge t$ where
$t>0$ is fixed.  This can easily be achieved by adding a term of the type
$\tan(\pi r/2t)$ to the right-hand side of \eqref{eq:DefTr} and by changing
the definition of $\ell_s$ in an obvious way.

Now let again $F(\,\nbull\,,X_\tbull(x))$ be a local martingale, 
as in Section~\ref{Sect4}, and consider the variation
\begin{equation}
  \label{eq:PerturbedMart}
  F\bigl(\,\nbull\,,X^\lambda_\tbull(x)\bigr)\,G^\lambda_\tbull
\end{equation}
of local martingales where
\begin{equation}
  G^\lambda_t=\exp\left(-\int_0^t\,\langle a_s\lambda,dZ_s\rangle
    -{1\over2}\,\int_0^t\vert a_s\lambda\vert^2\,ds\right).
\end{equation}
Then
\begin{equation*}
  n_s=dF(s,\nbull\,)_{X_s(x)}\,X_{s\ast}
  \left(\int_0^s X_{r\ast}^{-1}A\bigl(X_r(x)\bigr)\,a_r\,dr\right)
  -F\bigl(s,X_s(x)\bigr)\int_0^sa_r^\ast\,dZ_r
\end{equation*}
is a local martingale in $T_xM$.  Observe that $n$ is the derivative of
\eqref{eq:PerturbedMart} at $0$ with respect to $\lambda$, i.e.,
$n_s=\partial_\lambda^\mstrut\bigl\vert_{\lambda=0}
F\bigl(s,X^\lambda_s(x)\bigr)\,G^\lambda_s$.  In particular, taking
\begin{equation}
  \label{eq:Choicefora}
  a_s=(X_{s\ast}^{-1}A)_x^\ast,
\end{equation} 
then
\begin{equation*}
  n_s=dF(s,\nbull\,)_{X_s(x)}\,X_{s\ast}\,C_s(x)
  -F\bigl(s,X_s(x)\bigr)\int_0^s(X_{r\ast}^{-1}A)_x\,dZ_r.
\end{equation*}
This implies that also
\begin{equation*}
  N_s:=n_sh_s-\int_0^sn_r\,dh_r
\end{equation*}
is a local martingale for any $T_x^\ast M$-valued adapted process $h$ locally
of bounded variation.  We choose $h_s=C_s(x)^{-1}\ell_s$ where $\ell$ is given
by \eqref{eq:CMprocess}.  Taking expectations gives
\begin{align}
  \label{eq:PrelimFormula}
  dF(0,\nbull\,)_xv&=
  \E\left[dF(\sigma,\nbull\,)_{X_\sigma(x)}\,X_{\sigma\ast}\,v\right]\\
  \notag &=\E\left[F\bigl(\sigma,X_\sigma(x)\bigr)
    \left(\int_0^\sigma(X_{s\ast}^{-1}A)_x\,dZ_s \right)C_\sigma^{-1}(x)\,v
    +\int_0^\sigma n_s\,dh_s\right].
\end{align}
where $\sigma:=\sigma(t_0)$.  We deal separately with the term
\begin{equation}
  \label{eq:Term}
  \E\left[\int_0^\sigma n_s\,dh_s\right]
  =\E\left[\int_0^\sigma \partial_\lambda^\mstrut\bigl\vert_{\lambda=0}
    \bigl[F\bigl(s,X^\lambda_s(x)\bigr)\,G^\lambda_s\bigr]\,
    d\bigl(C_s(x)^{-1}\ell_s\bigr)\right].
\end{equation}

To avoid integrability problems, it may be necessary, as in proof of Theorem
\ref{ResultCrude}, to go through the calculation first with
\eqref{eq:Choicefora} replaced by
\begin{equation*}
  a_s^k=(X_{s\ast}^{-1}A)_x^\ast\,1_{\{s\leq\tau_k\}},
\end{equation*}
where $(\tau_k)$ is an appropriate increasing sequence of stopping times such
that $\tau_k\nearrow\sigma$, and to take the limit as $k\to\infty$ in the
final formula. Note that, without loss of generality, $\sigma$ may be assumed
to be bounded.  We shall omit this technical modification here.

We return to the term \eqref{eq:Term}. Observe that
\begin{align*}
  \E&\left[\int_0^{\sigma^\lambda}
    F\bigl(s,X^\lambda_s(x)\bigr)\,G^\lambda_s\,
    d\bigl(C^\lambda_s(x)^{-1}\ell^\lambda_s\bigr)\right]\\
  &\quad\equiv \int_0^\infty\E\left[
    1_{\{s\leq\sigma^\lambda\}}\,F\bigl(s,X^\lambda_s(x)\bigr)\,G^\lambda_s\,
    \frac d{ds}\bigl(C^\lambda_s(x)^{-1}\ell^\lambda_s\bigr)\right]ds
\end{align*}
is independent of $\lambda$.  Thus differentiating with respect to $\lambda$
at $\lambda=0$ gives
\begin{align*}
  &\E\left[\int_0^\sigma n_s\,dh_s\right]\\
  &\quad= -\E\left[\int_0^\sigma F_s\,
    d\Bigl[\partial_\lambda^\mstrut\bigl\vert_{\lambda=0}
    \bigl(C^\lambda_s(x)^{-1}\ell^\lambda_s\bigr)\Bigr] +
    \partial_\lambda^\mstrut\bigl\vert_{\lambda=0}\int_0^{\sigma^\lambda}
    F_s\,d\bigl(C_s(x)^{-1}\ell_s\bigr)\right]\\
  &\quad= -\E\left[ F_\sigma\,
    \Bigl[\partial_\lambda^\mstrut\bigl\vert_{\lambda=0}
    \bigl(C^\lambda_s(x)^{-1}\ell^\lambda_s\bigr)\Bigr]_{s=\sigma} +
    F_\sigma\,\left(\frac d{ds}\Bigl\vert_{s=\sigma}C_s(x)^{-1}\ell_s\right)
    \left(\partial_\lambda^\mstrut\bigl\vert_{\lambda=0}{\sigma^\lambda}\right)\right]
\end{align*}
where $F_s\equiv F\bigl(s,X_s(x)\bigr)$.  Note that all terms in the last line
are nicely integrable.  Substituting this back into Eq.\
\eqref{eq:PrelimFormula}, we find a formula of the wanted type:
\begin{equation}
  \label{eq:WantedFormula}
  dF(0,\nbull\,)_xv
  =\E\left[F\bigl(\sigma,X_\sigma(x)\bigr)\,\Phi_\sigma v\right]
\end{equation}
where $\Phi_\sigma$ takes values in $T^\ast_xM$ and is $L^p$-integrable for
any $1\leq p<\infty$.  Summarizing the above discussion, we conclude with the
following two theorems.

\begin{thm}
  \label{ThmOneGeneral}
  Let $M$ be a smooth manifold and $\map fM\R$ a bounded measurable
  function. Assume that {\rm(H1)} holds.  Let $x\in M$, $v\in T_xM$,
  $t>0$. Then
  \begin{equation}
    \label{eq:dSemigroupGeneral}
    d(P_tf)_xv
    =\E\Bigl[f\bigl(X_t(x)\bigr)\,1_{\{t<\zeta(x)\}}\,\Phi_tv\Bigr]  
  \end{equation}
  for the minimal semigroup $P_tf$ defined by~{\rm\eqref{eq:SemiGroupMin}}
  where $\Phi_t$ is a $T^\ast_xM$-valued random variable which is
  $L^p$-integrable for any $1\leq p<\infty$ and local in the following sense:
  For any relatively compact neighbourhood $D$ of $x$ in $M$ there is a choice
  for $\Phi_t$ which is $\SF_\sigma$-measurable where
  $\sigma=t\wedge\tau_D^\mstrut(x)$ and $\tau_D^\mstrut(x)$ is the first exit
  time of $X$ from $D$ when starting at $x$.
\end{thm}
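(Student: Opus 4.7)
The plan is to apply the integration by parts construction developed just before the theorem in Section~\ref{Sect7}, with the transformation $F(s,y) := (P_{t-s}f)(y)$ so that $F(s,X_s(x))$ is a local martingale on $[0,t\wedge\zeta(x)[$. Fix a relatively compact open neighbourhood $D\ni x$ with smooth boundary and a $\varphi\in C^2(\bar D)$ vanishing on $\partial D$ and positive in $D$. Using the time change $T(s)=\int_0^s\varphi^{-2}(X_r(x))\,dr$ and its inverse $\sigma(\,\nbull\,)$, build the Cameron--Martin direction $\ell_s$ from a cut-off $\rho\in C^1(\R_+)$ with $\rho=0$ near $0$ and $\rho=t_0$ on $[t_0,\infty)$, modified as indicated (by a $\tan(\pi r/2t)$ term) so that $\ell_s=v$ for $s\ge\sigma(t_0)\wedge t$. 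Set $\sigma:=\sigma(t_0)\wedge t\wedge\tau_D(x)$; then $\sigma\le\tau_D(x)<\zeta(x)$, which is the key to tying back to the minimal semigroup.

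Next, choose $a_s=(X_{s\ast}^{-1}A)_x^\ast$ (first localized by $1_{\{s\le\tau_k\}}$ to satisfy the Girsanov condition~\eqref{eq:CondGirsanov}, then letting $k\to\infty$ at the end) and $h_s=C_s(x)^{-1}\ell_s$. By the computation preceding the theorem,
\begin{equation*}
  n_s = dF(s,\nbull\,)_{X_s(x)}\,X_{s\ast}\,C_s(x)
  - F\bigl(s,X_s(x)\bigr)\int_0^s(X_{r\ast}^{-1}A)_x\,dZ_r
\end{equation*}
and $N_s=n_sh_s-\int_0^sn_r\,dh_r$ are local martingales. Stopping at $\sigma$ (where all the ingredients are bounded or $L^p$ by Remark~\ref{AllLp} applied to $\tau_D(x)$) produces a uniformly integrable martingale, so taking expectation in the identity \eqref{eq:PrelimFormula} gives
\begin{equation*}
  dF(0,\nbull\,)_xv
  =\E\left[F(\sigma,X_\sigma(x))\left(\int_0^\sigma(X_{s\ast}^{-1}A)_x\,dZ_s\right)C_\sigma(x)^{-1}v
    +\int_0^\sigma n_s\,dh_s\right].
\end{equation*}
The residual term $\E[\int_0^\sigma n_s\,dh_s]$ is recognised as a derivative in $\lambda$ of a Girsanov-invariant quantity involving $F(s,X_s^\lambda(x))G_s^\lambda$, and is rewritten as $\E[F(\sigma,X_\sigma(x))\cdot R_\sigma v]$ where $R_\sigma$ collects boundary terms from the differentiation of $C_s^\lambda(x)^{-1}\ell_s^\lambda$ and $\sigma^\lambda$ at $\lambda=0$ (cf.~Lemma~\ref{DerMallInv}).

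To finish, set $\Phi_\sigma v := \left(\int_0^\sigma(X_{s\ast}^{-1}A)_x\,dZ_s\right)C_\sigma(x)^{-1}v+R_\sigma v$; this is $\SF_\sigma$-measurable by construction. Since $\sigma\le\tau_D(x)$ the Brownian path, the flow, the derivative flow, and $(\det C_\sigma(x))^{-1}$ are all in every $L^p$ (Remark~\ref{AllLp}), and the same is true of $\partial_\lambda|_{\lambda=0}\ell^\lambda$ and $\partial_\lambda|_{\lambda=0}\sigma^\lambda$ as noted in Section~\ref{Sect7}. Hence $\Phi_\sigma\in\bigcap_{p<\infty}L^p$. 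Finally, because $\sigma<\zeta(x)$ and $\sigma\le t$, the strong Markov property together with the definition of $P_tf$ yields
\begin{equation*}
  F(\sigma,X_\sigma(x)) = (P_{t-\sigma}f)(X_\sigma(x))
  = \E^{\SF_\sigma}\!\bigl[f(X_t(x))\,1_{\{t<\zeta(x)\}}\bigr],
\end{equation*}
so taking the tower gives \eqref{eq:dSemigroupGeneral} with $\Phi_t:=\Phi_\sigma$. Setting $\Phi_t:=\Phi_{t\wedge\tau_D(x)}$ for any given $D$ delivers the stated locality property.

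The main obstacle is the interchange of differentiation in $\lambda$ and expectation at the Girsanov step, combined with the need to control the inverse Malliavin covariance $C_s(x)^{-1}$ up to the stopping time: this is precisely why we localize by $\sigma\le\tau_D(x)$ and rely on Remark~\ref{AllLp}, and why we first insert the cut-off $1_{\{s\le\tau_k\}}$ in $a_s$ and pass to the limit only after the integration by parts has been performed. The second, more technical, obstacle is verifying that the residual term $\E[\int_0^\sigma n_s\,dh_s]$ admits the Girsanov-invariance reformulation up to $\sigma$ despite $\sigma^\lambda\neq\sigma$; this is handled by the identity $T^\lambda\circ\sigma^\lambda=\id$ and the explicit formula $\partial_\lambda|_{\lambda=0}\sigma^\lambda=-(T'\circ\sigma)^{-1}(\partial_\lambda|_{\lambda=0}T^\lambda)\circ\sigma$ recorded above.
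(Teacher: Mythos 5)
Your proposal reproduces, step by step, the construction the paper carries out in Section~\ref{Sect7} before the theorem — time change $T$, cut-off $\ell$, the choice $a_s=(X_{s\ast}^{-1}A)_x^\ast$ localized by $1_{\{s\le\tau_k\}}$, $h_s=C_s(x)^{-1}\ell_s$, the Girsanov-invariance reformulation of $\E[\int_0^\sigma n_s\,dh_s]$, and finally the optional sampling identity $(P_{t-\sigma}f)(X_\sigma(x))=\E^{\SF_\sigma}[f(X_t(x))1_{\{t<\zeta(x)\}}]$. This is exactly the paper's proof (which is essentially a pointer to~\eqref{eq:WantedFormula} plus the Markov/martingale step), so the approach is the same and correct.
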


\begin{proof}
  Let $F(\,\nbull\,,X_\tbull(x))=(P_{t-\tbull}f)\bigl(X_\tbull(x)\bigr)$.
  Then Eq.~\eqref{eq:WantedFormula} gives
  \begin{equation*}
    d(P_tf)_xv
    =\E\left[F\bigl(\sigma,X_\sigma(x)\bigr)\,\Phi_\sigma\right]
  \end{equation*}
  Again by taking into account that $(P_{t-\sigma}f)(X_\sigma(x))
  =\E^{\SF_\sigma}\bigl[f\bigl(X_t(x)\bigr)\,1_{\{t<\zeta(x)\}}\bigr]$, we get
  the claimed formula.
\end{proof}

\begin{thm}
  \label{ThmTwoGeneral}
  Let $M$ be compact with smooth boundary $\partial M\not=\emptyset$ and $u\in
  C(M)$ be $L$-harmonic on $M{\setminus}\partial M$.  Assume that {\rm(H1)}
  holds.  Let $x\in M{\setminus}\partial M$ and $v\in T_xM$.  Denote $\tau(x)$
  the first hitting time of $X_\tbull(x)$ at $\partial M$.  Then the following
  formula holds:
  \begin{equation}
    \label{eq:dHarmonicGeneral}
    (du)_xv
    =\E\bigl[u\bigl(X_{\tau(x)}(x)\bigr)\,\Phi_{\tau(x)}v\bigr]  
  \end{equation}
  where $\Phi_{\tau(x)}$ is a $T^\ast_xM$-valued random variable which is in
  $L^p$ for any $1\leq p<\infty$ and local in the following sense: For any
  relatively compact neighbourhood $D$ of $x$ in $M$ there is a choice for
  $\Phi_{\tau(x)}$ which is already $\SF_\sigma$-measurable where
  $\sigma=\tau_D^\mstrut(x)$ is the first exit time of $X$ from $D$ when
  starting at $x$.
\end{thm}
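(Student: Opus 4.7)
The approach mirrors the proof of Theorem~\ref{ThmOneGeneral}, now with the family of local martingales taken to be $F(\,\nbull\,,X_\tbull(x))=u(X_\tbull(x))$, which is in fact a bounded martingale (when stopped at $\tau(x)$) since $u\in C(M)$ with $M$ compact and $u$ is $L$-harmonic on $M\setminus\partial M$. Pick a relatively compact neighbourhood $D$ of $x$ with $\bar D\subset M\setminus\partial M$ and smooth boundary. The plan is to produce a formula at the time $\sigma:=\sigma(t_0)\le\tau_D^\mstrut(x)$ using the machinery of Section~\ref{Sect7}, and then to pass to the boundary hitting time $\tau(x)$ by optional sampling.

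Concretely, choose $\varphi\in C^2(\bar D)$ with $\varphi\vert\partial D=0$ and $\varphi>0$ on $D$, and form $T(s)$, $\sigma(t_0)$ and the absolutely continuous $T_xM$-valued process $\ell_s$ via \eqref{eq:DefTr}--\eqref{eq:CMprocess}. With the choice $a_s=(X_{s\ast}^{-1}A)_x^\ast$ and $h_s=C_s(x)^{-1}\ell_s$, the Girsanov differentiation argument leading to \eqref{eq:WantedFormula} yields
\begin{equation*}
(du)_xv=\E\bigl[u\bigl(X_\sigma(x)\bigr)\,\Phi_\sigma v\bigr],
\end{equation*}
where $\Phi_\sigma$ is $\SF_\sigma$-measurable and lies in $\bigcap_{p\ge1}L^p$. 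Integrability of $\Phi_\sigma$ comes from Remark~\ref{AllLp}: since $\sigma\le\tau_D^\mstrut(x)$, we have $(\det C_\sigma(x))^{-1}\in\bigcap_pL^p$ and $\tau_D^\mstrut(x)\in\bigcap_pL^p$, while $X_{s\ast}$ and $X_{s\ast}^{-1}$ have moments of all orders up to~$\tau_D^\mstrut(x)$.

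To replace $X_\sigma(x)$ by $X_{\tau(x)}(x)$, note that $u(X_{\cdot\wedge\tau(x)}(x))$ is a bounded martingale with $\tau(x)<\infty$ a.s.\ (standard under (H1) on a compact manifold with nonempty boundary), and $\sigma\le\tau_D^\mstrut(x)<\tau(x)$ a.s. By optional sampling,
\begin{equation*}
u\bigl(X_\sigma(x)\bigr)=\E^{\SF_\sigma}\bigl[u\bigl(X_{\tau(x)}(x)\bigr)\bigr],
\end{equation*}
and $\SF_\sigma$-measurability of $\Phi_\sigma$ together with conditioning gives
\begin{equation*}
(du)_xv=\E\bigl[u\bigl(X_{\tau(x)}(x)\bigr)\,\Phi_\sigma v\bigr].
\end{equation*}
Setting $\Phi_{\tau(x)}:=\Phi_\sigma$, which is $\SF_{\tau_D^\mstrut(x)}$-measurable since $\sigma\le\tau_D^\mstrut(x)$, yields the claimed local formula.

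The main technical obstacle is justifying that the local martingale produced inside the Section~\ref{Sect7} calculation is actually a true, uniformly integrable, martingale on $[0,\sigma]$, so that taking expectations on both sides is legitimate; this is handled, as indicated in the paragraph after \eqref{eq:Term}, by first carrying out the differentiation with $a_s$ truncated by an increasing sequence of stopping times $\tau_k\nearrow\sigma$ and then passing to the limit using the $L^p$-bounds coming from Remark~\ref{AllLp}.
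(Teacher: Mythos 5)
Your proof is correct and follows essentially the same route as the paper: the paper's own proof simply declares the argument ``completely analogous'' to Theorem~\ref{ThmOneGeneral}, applying the general Section~\ref{Sect7} construction to $F(\,\nbull\,,X_\tbull(x))=u(X_\tbull(x))$ and then using optional sampling to pass from $u(X_\sigma(x))$ to $\E^{\SF_\sigma}[u(X_{\tau(x)}(x))]$. You have made these steps explicit, including the choice of $D$ with $\bar D\subset M\setminus\partial M$, the integrability inputs from Remark~\ref{AllLp}, and the finiteness of $\tau(x)$, which is exactly what the paper leaves implicit.
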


\begin{proof}
  The proof is completely analogous to the proof of
  Theorem~\ref{ThmOneGeneral}.
\end{proof}

\begin{example}[Greek Deltas for Asian Options]\label{as_opt} 
Consider the following SDE on the real line:
\begin{equation}
  \label{Eq:Asian}
dS_t=\sigma(S_t)\,dW_t+\mu(S_t)\,dt\,,
\end{equation}
where $W_t$ is a real Brownian motion.
In Mathematical Finance one likes to calculate 
so-called \textit{Greek Deltas for Asian Options\/} 
which are expressions of the form 
$$\Delta_0=\frac\partial{\partial S_0}\E[f(S_T,A_T)],\quad T>0,$$
where $S_t$ is given as solution to \eqref{Eq:Asian} and
\begin{equation}
  \label{Eq:Asian1}
A_t=\int_0^t S_r\,dr.
\end{equation}
We may convert \eqref{Eq:Asian} to Stratonovich form 
$$dS_t=\sigma(S_t)\,\delta W_t+m(S_t)\,dt$$
and consider $X_t:=(S_t,A_t)$ as a diffusion on $\R^2$. 
Then
$$d\begin{pmatrix}X^1_t\\ X^2_t\end{pmatrix}
=\begin{pmatrix}\sigma(X^1_t)\\0\end{pmatrix}\circ dW_t
+\begin{pmatrix}m(X_t^1)\\ X_t^1\end{pmatrix}\,dt
$$
with the vector fields 
$$A_0=\begin{pmatrix}m(x_1)\\ x_1\end{pmatrix},\quad
A_1=\begin{pmatrix}\sigma(x_1)\\0\end{pmatrix}.$$
Observe that 
$$[A_1,A_0]=\begin{pmatrix}\sigma(x_1) m'(x_1)-\sigma'(x_1) m(x_1)\\ \sigma(x_1)\end{pmatrix}.$$
Thus if $\sigma>0$, then $X_t=(S_t,A_t)$ defines a hypoelliptic diffusion on $\R^2$.
\end{example}

\begin{example}[Trivial example]
In the special case $\sigma>0$ constant and $\mu=0$, i.e.,
$$\left\lbrace
\begin{aligned}
dS_t&=\sigma\,dW_t\\
dA_t&=S_t\,dt,
\end{aligned}\right.
 $$
one easily checks
$$
X_{t\ast}=
\begin{pmatrix}1&0\\ t&1\end{pmatrix}\quad\text{and}\quad
X_{t\ast}^{-1}(A_1)\otimes X_{t\ast}^{-1}(A_1)=
\sigma^2\begin{pmatrix}1&-t\\ -t&t^2\end{pmatrix},
$$
and hence 
$$C_T(x)=\sigma^2\begin{pmatrix}T&-T^2/2\\ -T^2/2&T^3/3\end{pmatrix}.$$
Consequently, the integration by parts argument of Sect.~\ref{Sect3} immediately gives
$$\frac\partial{\partial S_0}\E[f(S_T,A_T)]=
\frac6{\sigma T}\,\E\left[f(S_T,A_T)\,\left(\frac1T\int_0^TW_tdt-\frac13W_T\right)\right]
.$$
\end{example}

\begin{remark} In the more general situation of Example \ref{as_opt}, i.e.,
\begin{equation*}
dS_t=\sigma(S_t)\,dW_t+\mu(S_t)\,dt\quad\text{and}\quad
A_t=\int_0^t S_r\,dr,
\end{equation*}
Theorem \ref{ThmOneGeneral} may be applied to give a formula of the type
$$\Delta_0=\frac\partial{\partial S_0}\E[f(S_T,A_T)]
=\E[f(S_T,A_T)\,\pi_T^\mstrut],$$
where the weight $\pi_T$ is explicitely given and may be implemented numerically 
in Monte-Carlo simulations. See \cite{Friz_Cass:2010} for extensions to jump diffusions, 
and \cite{FLT:2008} for weights $\pi_T^\mstrut$ in terms of anticipating integrals.  
\end{remark}

\section{The Case of Non-Euclidean Targets}\label{Sect8}
\setcounter{equation}0\noindent The aim of this section is to adapt our
method, to some extent, to the nonlinear case of harmonic maps between
manifolds.  In addition to the manifold $M$, carrying a hypoelliptic
$L$-diffusion, we fix another manifold $N$, endowed with a torsionfree
connection $\nabla$.  In stochastic terms, a smooth map $\map uMN$ is harmonic
(with respect to~$L$) if it takes $L$-diffusions on $M$ to
$\nabla$-martingales on $N$.  Likewise, a smooth map $\map u{[0,t]\times
  M}{N}$ is said to solve the nonlinear heat equation, if
$u\bigl(t-\tbull\,,X_\tbull(x)\bigr)$ is a $\nabla$-martingale on $N$ for any
$L$-diffusion $X_\tbull(x)$ on $M$.

Henceforth, we fix a family $F(\,\nbull\,,X_\tbull(x))$, $x\in M$ of
$\nabla$-martingales on $N$ where $F$ is differentiable in the second variable
with a derivative jointly continuous in both variables.  In particular, such
transformations $F$ map hypoelliptic $L$-diffusions on $M$ into
$\nabla$-martingales on $N$ and include the following two cases:
\begin{align*}
  F(\,\nbull\,,X_\tbull(x))
  &=u\circ X_\tbull(x)\text{ for some harmonic map $\map uMN$, and}\\
  F(\,\nbull\,,X_\tbull(x))&=u\bigl(t-\tbull\,,X_\tbull(x)\bigr) \text{ where
    $u$ solves the heat equation for maps $M\to N$.}
\end{align*}

Theorem \ref{LocMart} is easily extended to this situation.  Recall that, if
$Y$ is a continuous semimartingale taking values in a manifold $N$ endowed
with a torsionfree connection~$\nabla$, then the geodesic (damped or deformed)
transport $\map{\Theta_{0,t}^\mstrut}{T^\mstrut_{\!Y_0}N}{T^\mstrut_{\!Y_t}N}$
on $N$ along $Y$ is defined by the following covariant equation along~$Y$:
\begin{equation}
  \label{eq:GeodTrans}
  \left\lbrace\begin{aligned}
      d\,(\itr0\bull\Theta_{0,\bull}^\mstrut)&=
      -\textstyle{1\over2}\,\itr0\bull
      R(\Theta_{0,\bull}^\mstrut,dY)dY\\
      \Theta_{0,0}^\mstrut&=\id
    \end{aligned}\right.
\end{equation}
where $\map{\tr0t}{T^\mstrut_{\!Y_0}N}{T^\mstrut_{\!Y_t}N}$ is parallel
translation on $N$ along $Y$ and $R$ the curvature tensor to $\nabla$,
see~\cite{A-T 97}.  Finally, recall the notion of anti-development of $Y$,
resp.\ ``deformed anti-development'' of $Y$,
\begin{equation}
  \label{eq:AntiDev}
  \SA(Y)=\int_0^{\bull}\itr0s\,\delta Y_s, \quad
  \SA_{\hbox{\sevenrm def}}(Y)=\int_0^{\bull}\Theta_{0,s}^{-1}\,\delta Y_s
\end{equation}
which by definition both take values in $T^\mstrut_{\!Y_0}N$.  Note that an
$N$-valued semimartingale is a $\nabla$-martingale if and only if $\SA(Y)$, or
equivalently $\SA_{\hbox{\sevenrm def}}(Y)$, is a local martingale.

\begin{thm}
  \label{LocMartCurvedTarget}
  Let $F(\,\nbull\,,X_\tbull(x))$, $x\in M$ be a family of
  $\nabla$-martingales on $N$, as described above.  Then, for any predictable
  $\R^r$-valued process $k$ in $L^2_{\hbox{\sevenrm loc}}(Z)$,
  \begin{equation}
    \label{eq:QuasiDerCurvedTarget}
    \Theta_{0,\bull}^{-1}\,dF(\,\nbull\,,X_\tbull(x))\,(T_xX_\tbull)
    \int_0^\bull (X_{s\ast}^{-1}A)_x^\mstrut k_s\,ds
    -\SA_{\hbox{\sevenrm def}}\bigl(F(\,\nbull\,,X_\tbull(x))\bigr)
    \int_0^\bull\langle k,dZ\rangle
  \end{equation}
  is a local martingale in $T_{F(0,x)}N$.  Here $\Theta_{0,\bull}$ denotes the
  geodesic transport on $N$ along the martingale $F(\,\nbull\,,X_\tbull(x))$.
\end{thm}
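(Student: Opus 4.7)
I would adapt the Girsanov-based first proof of Theorem \ref{LocMart} to the curved-target setting, with the deformed anti-development $\SA_{\hbox{\sevenrm def}}$ playing the role played by the $N$-valued martingale itself in the flat case. The key nonlinear ingredient, as developed in \cite{A-T 96} and \cite{A-T 97}, is the following variational principle: if $\varepsilon\mapsto Y^\varepsilon$ is a $C^1$-family of $\nabla$-martingales on $N$ with common starting point $y_0$, then
\begin{equation*}
\partial_\varepsilon|_{\varepsilon=0}\,\SA_{\hbox{\sevenrm def}}(Y^\varepsilon)_s \,=\, \Theta_{0,s}^{-1}\,\partial_\varepsilon|_{\varepsilon=0}\,Y^\varepsilon_s
\end{equation*}
defines a local martingale in $T_{y_0}N$; the damped transport $\Theta$ from \eqref{eq:GeodTrans} is precisely tailored so that this identity holds.

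For $\varepsilon$ varying locally about $0$, consider the perturbed SDE \eqref{eq:SDEPert} driven by $Z^\varepsilon$ with $dZ^\varepsilon = dZ + \varepsilon k\,dt$, and let $X^\varepsilon_\tbull(x)$ denote its partial flow. Under the probability $dQ^\varepsilon := G^\varepsilon_\tbull\,dP$, with $G^\varepsilon$ the usual Girsanov exponential, the process $Z^\varepsilon$ is a Brownian motion, hence $F(\nbull, X^\varepsilon_\tbull(x))$ remains a $\nabla$-martingale under $Q^\varepsilon$, and its deformed anti-development $\SA_{\hbox{\sevenrm def}}\bigl(F(\nbull, X^\varepsilon_\tbull(x))\bigr)$, which takes values in the fixed vector space $T_{F(0,x)}N$, is a local martingale under $Q^\varepsilon$. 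Girsanov's theorem then ensures that the product
\begin{equation*}
M^\varepsilon_\tbull := G^\varepsilon_\tbull\cdot\SA_{\hbox{\sevenrm def}}\bigl(F(\nbull, X^\varepsilon_\tbull(x))\bigr)
\end{equation*}
is a local martingale under $P$, depending $C^1$ on $\varepsilon$ in the topology of compact convergence in probability.

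Differentiating $M^\varepsilon$ at $\varepsilon=0$ therefore yields a local martingale in $T_{F(0,x)}N$. Combining the standard variational identities
\begin{equation*}
\partial_\varepsilon|_{\varepsilon=0}\,G^\varepsilon_s = -\int_0^s\langle k,dZ\rangle, \qquad \partial_\varepsilon|_{\varepsilon=0}\,X^\varepsilon_s(x) = (T_xX_s)\!\int_0^s(X_{r\ast}^{-1}A)_x\,k_r\,dr,
\end{equation*}
with the variational principle applied to $Y^\varepsilon:=F(\nbull, X^\varepsilon_\tbull(x))$, namely
\begin{equation*}
\partial_\varepsilon|_{\varepsilon=0}\,\SA_{\hbox{\sevenrm def}}\bigl(F(\nbull, X^\varepsilon_\tbull(x))\bigr)_s = \Theta_{0,s}^{-1}\,dF(s,\nbull)_{X_s(x)}\,\partial_\varepsilon|_{\varepsilon=0}\,X^\varepsilon_s(x),
\end{equation*}
one recovers exactly the expression \eqref{eq:QuasiDerCurvedTarget} as $\partial_\varepsilon|_{\varepsilon=0}M^\varepsilon_\tbull$, which is therefore a local martingale, as claimed.

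The main obstacle is the rigorous justification of the variational principle for this specific family. Two points need to be verified: (i) that $\varepsilon\mapsto F(\nbull, X^\varepsilon_\tbull(x))$ is indeed a $C^1$-family of $\nabla$-martingales in the relevant topology, which follows from the $C^1$-dependence of the partial flow on perturbations of the driving Brownian motion together with the assumption that $dF$ is jointly continuous in both variables; and (ii) the commutation of $\partial_\varepsilon$ with $\SA_{\hbox{\sevenrm def}}$, whose content is that the curvature term in \eqref{eq:GeodTrans} exactly compensates the quadratic It\^o contribution generated by the nonlinearity of $N$, leaving the linear variation formula above. Both points are in the spirit of \cite{A-T 96} and \cite{A-T 97} and can be invoked here. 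An alternative route, avoiding Girsanov, would mirror the second proof of Theorem \ref{LocMart}: by the same variational principle the process $m_s := \Theta_{0,s}^{-1}\,dF(s,\nbull)_{X_s(x)}\,X_{s\ast}$ is a local martingale, and an integration by parts against $h_s := \int_0^s(X_{r\ast}^{-1}A)_x\,k_r\,dr$ then produces \eqref{eq:QuasiDerCurvedTarget} modulo local martingales.
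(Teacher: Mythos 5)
Your primary (Girsanov) route contains a genuine gap in the variational identity. You claim
\begin{equation*}
\partial_\varepsilon\vert_{\varepsilon=0}\,\SA_{\hbox{\sevenrm def}}(Y^\varepsilon)_s
=\Theta_{0,s}^{-1}\,\partial_\varepsilon\vert_{\varepsilon=0}\,Y^\varepsilon_s ,
\end{equation*}
but this is not an exact equality when $N$ is curved: the deformed transport $\Theta^\varepsilon_{0,\bull}$ entering the definition of $\SA_{\hbox{\sevenrm def}}(Y^\varepsilon)$ is itself taken along the $\varepsilon$-dependent path, and its variation contributes an extra term. This is exactly the content of Lemma~\ref{VertParVariation} in Section~\ref{Sect8} of the paper, which shows that the variation of $\SA_{\hbox{\sevenrm def}}$ equals $\Theta_{0,\bull}^{-1}J-J_0$ \emph{plus} an Itô integral $\int_0^\bull\Theta_{0,s}^{-1}(\nabla\Theta_{0,s}^\mstrut)\,dY_s$ coming from the variation of the transport. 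So your step ``one recovers exactly the expression~\eqref{eq:QuasiDerCurvedTarget} as $\partial_\varepsilon\vert_{\varepsilon=0}M^\varepsilon$'' is not quite right. The conclusion is nevertheless safe, because $Y=\SA_{\hbox{\sevenrm def}}(F(\,\nbull\,,X_\tbull(x)))$ is a local martingale and hence the correction term is itself a local martingale; so the identity you want holds only \emph{modulo} local martingales, which suffices for the theorem. You should restate your variational principle in that weaker form rather than as an exact commutation of $\partial_\varepsilon$ with $\SA_{\hbox{\sevenrm def}}$.

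Your ``alternative route, avoiding Girsanov'' at the end is precisely the paper's own proof, and it is the cleaner one here: take from \cite{A-T 97} that $m_s:=\Theta_{0,s}^{-1}\,dF(s,\nbull)_{X_s(x)}\,X_{s\ast}$ is a local martingale in $T_xM\otimes T_{F(0,x)}N$, note the Itô representation
\begin{equation*}
\SA_{\hbox{\sevenrm def}}\bigl(F(\,\nbull\,,X_\tbull(x))\bigr)
=\int_0^\bull\Theta_{0,s}^{-1}\,dF(s,\nbull)_{X_s(x)}\,A\bigl(X_s(x)\bigr)\,dZ_s,
\end{equation*}
and then run the integration-by-parts step of the alternative proof of Theorem~\ref{LocMart} with $h_s=\int_0^s(X_{r\ast}^{-1}A)_x k_r\,dr$. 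This route never differentiates $\SA_{\hbox{\sevenrm def}}$ in the parameter, so the $\nabla\Theta$ term simply never appears.
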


\begin{proof} Observe that by \cite{A-T 97},
  \begin{equation*}
    m_s:=\Theta_{0,s}^{-1}\,dF(s,\,\nbull\,)_{X_s(x)}\,X_{s\ast}
  \end{equation*}
  is local martingale taking values in $T_xM\otimes T_{F(0,x)}N$, and that by
  definition,
  \begin{equation*}
    \SA_{\hbox{\sevenrm def}}\bigl(F(\,\nbull\,,X_\tbull(x))\bigr)=
    \int_0^\bull\Theta_{0,s}^{-1}\,dF(s,\,\nbull\,)_{X_s(x)}
    \,A\bigl(X_s(x)\bigr)\,dZ_s.
  \end{equation*} 
  The rest of the (alternative) proof to Theorem \ref{LocMart} carries over
  with straight-forward modifications.
\end{proof}

It is straightforward to extend Theorem \ref{ThmOne} and Theorem \ref{ThmTwo}
to the nonlinear setting by means of the local martingale
\eqref{eq:QuasiDerCurvedTarget}.

\begin{thm}
  \label{ThmOneVar}
  Let $\map u{[0,t]\times M}{N}$ be a solution of the nonlinear heat equation,
  $x\in M$, $v\in T_xM$.  Let $D$ be a relatively compact open neighbourhood
  of $x$ and $\sigma=\tau_D^\mstrut(x)\wedge t$ where $\tau_D^\mstrut(x)$ is
  the first exit time of $X_\tbull(x)$ from $D$.  Suppose there exists an
  $\R^r$-valued predictable process $k$ such that
  \begin{equation*}
    \int_0^\sigma (X_{s\ast}^{-1}A)_x\,k_s\,ds\equiv v,\quad\hbox{a.s.}
  \end{equation*}
  and $\bigl(\int_0^\sigma\vert k_s\vert^2\,ds\bigr)^{1/2}\in
  L^{1+\varepsilon}$ for some $\varepsilon>0$.  Then the following formula
  holds:
  \begin{equation}
    \label{eq:dHeatEq}
    du(t,\,\nbull\,)_xv=\E\biggl[
    \SA_{\hbox{\sevenrm def}}\bigl(u(t-\nbull\,,X_\bull(a))\bigr)_{\sigma}
    \int_0^\sigma\langle k,dZ\rangle\biggr].  
  \end{equation}
\end{thm}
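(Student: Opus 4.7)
The plan is to mimic the proof of Theorem~\ref{ThmOne}, with Theorem~\ref{LocMartCurvedTarget} replacing Theorem~\ref{LocMart}. I would apply Theorem~\ref{LocMartCurvedTarget} to the space-time transformation $F(s,y):=u(t-s,y)$, which by hypothesis sends the hypoelliptic $L$-diffusion on $M$ to a $\nabla$-martingale on $N$. The resulting process
\begin{equation*}
N_s:=\Theta_{0,s}^{-1}\,du(t-s,\nbull)_{X_s(x)}\,X_{s\ast}\!\int_0^s\!(X_{r\ast}^{-1}A)_x k_r\,dr
-\SA_{\hbox{\sevenrm def}}\bigl(u(t-\nbull,X_\bull(x))\bigr)_s\!\int_0^s\!\langle k,dZ\rangle
\end{equation*}
is a local martingale in $T_{u(t,x)}N$ vanishing at $s=0$. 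The control hypothesis collapses the first summand at $s=\sigma$ to $\Theta_{0,\sigma}^{-1}\,du(t-\sigma,\nbull)_{X_\sigma(x)}\,X_{\sigma\ast}v$.

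Next I would bring in the auxiliary local martingale $m_s:=\Theta_{0,s}^{-1}\,du(t-s,\nbull)_{X_s(x)}\,X_{s\ast}$, identified in the alternative proof of Theorem~\ref{LocMartCurvedTarget}, which takes values in $T^\ast_xM\otimes T_{u(t,x)}N$ and satisfies $m_0v=du(t,\nbull)_xv$. Assuming $m_\bull v$ and $N_\bull$ are uniformly integrable on $[0,\sigma]$, optional sampling applied to each gives
\begin{equation*}
du(t,\nbull)_xv=\E\bigl[\Theta_{0,\sigma}^{-1}\,du(t-\sigma,\nbull)_{X_\sigma(x)}\,X_{\sigma\ast}v\bigr]
\quad\text{and}\quad \E[N_\sigma]=0.
\end{equation*}
Subtracting, the derivative terms cancel and formula~\eqref{eq:dHeatEq} drops out, exactly as in the Euclidean-target case but executed fibrewise in $T_{u(t,x)}N$.

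The main obstacle is verifying uniform integrability on $[0,\sigma]$. Since $\sigma\le\tau_D(x)\wedge t$, the stopping time is bounded and $X_\bull(x)$ remains in the relatively compact $\bar D$; consequently $u(t-s,X_s(x))$ stays in the compact set $u([0,t]\times\bar D)\subset N$, so the differential $du$, the Christoffel symbols, and the curvature tensor $R$ governing $\Theta_{0,\bull}$ (through~\eqref{eq:GeodTrans}) are uniformly bounded along the relevant paths. Combining this with standard $L^p$-bounds for $X_{s\ast}$ on $[0,\sigma]$ and the Burkholder-Davis-Gundy estimate applied to the hypothesis $(\int_0^\sigma|k_s|^2\,ds)^{1/2}\in L^{1+\varepsilon}$, both local martingales are dominated in $L^{1+\varepsilon'}$ for some $\varepsilon'>0$; the passage from local to uniformly integrable martingale then proceeds exactly as in Theorem~2.4 of~\cite{TH 97}. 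The only genuinely new ingredient compared with the linear case is the curvature correction carried by $\Theta_{0,\bull}$, but it is already absorbed into $\SA_{\hbox{\sevenrm def}}$ and adds no additional divergence on the bounded interval $[0,\sigma]$.
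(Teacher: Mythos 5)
Your proof is correct and takes essentially the same route the paper indicates: the paper itself gives no argument for Theorem~\ref{ThmOneVar} beyond the remark that it follows from Theorem~\ref{LocMartCurvedTarget} in the same way that Theorem~\ref{ThmOne} follows from Theorem~\ref{LocMart}, and your write-up is a faithful unwinding of that remark, using the two local martingales $m_\bull v$ and the quasi-derivative $N_\bull$, optional sampling at $\sigma$, and the compactness of $\bar D$ plus the $L^{1+\varepsilon}$ hypothesis to secure uniform integrability via Theorem~2.4 of~\cite{TH 97}.
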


\begin{thm}
  \label{ThmTwoVar}
  Let $M$ be compact with smooth boundary $\partial M\not=\emptyset$.  For
  $x\in M{\setminus}\partial M$ let $\tau(x)$ be the first hitting time of
  $\partial M$ with respect to the process $X_\tbull(x)$.  Given $v\in T_xM$,
  we suppose that there exists an $\R^r$-valued predictable process $k$ such
  that
  \begin{equation*}
    \int_0^{\tau(x)}(X_{s\ast}^{-1}A)_x^\mstrut\,k_s\,ds\equiv v,\quad\hbox{a.s.}
  \end{equation*}
  and $\bigl(\int_0^{\tau(x)}\vert k_s\vert^2\,ds\bigr)^{1/2}\in
  L^{1+\varepsilon}$ for some $\varepsilon>0$. Then, for any $u\in
  C^\infty(M,N)$ which is harmonic on $M{\setminus}\partial M$, the following
  formula holds:
  \begin{equation}
    \label{eq:dHarmNonLin}
    (du)_xv
    =\E\biggl[\SA_{\hbox{\sevenrm def}}\bigl(u(X_\tbull(x))\bigr)_{\tau(x)}
    \int_0^{\tau(x)}\langle k,dZ\rangle\,\biggr].  
  \end{equation}
\end{thm}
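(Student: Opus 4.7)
The strategy mimics the proof of Theorem~\ref{ThmTwo}, with Theorem~\ref{LocMartCurvedTarget} replacing Corollary~\ref{LocMartHarmonic} to accommodate the curved target. Setting $F(\bull,y)=u(y)$, harmonicity of $u$ makes $u(X_\bull(x))$ a $\nabla$-martingale on $[0,\tau(x)[$, so Theorem~\ref{LocMartCurvedTarget} produces the $T_{u(x)}N$-valued local martingale
\begin{equation*}
  M_s := \Theta_{0,s}^{-1}(du)_{X_s(x)}(T_xX_s)\int_0^s (X_{r\ast}^{-1}A)_x^\mstrut k_r\,dr
  -\SA_{\hbox{\sevenrm def}}\bigl(u(X_\bull(x))\bigr)_s\int_0^s\langle k,dZ\rangle,
\end{equation*}
where $\Theta_{0,\bull}$ is the geodesic transport on $N$ along $u(X_\bull(x))$. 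Clearly $M_0=0$, and the control hypothesis $\int_0^{\tau(x)}(X_{s\ast}^{-1}A)_x k_s\,ds=v$ gives
\begin{equation*}
M_{\tau(x)}=\Theta_{0,\tau(x)}^{-1}(du)_{X_{\tau(x)}(x)}(T_xX_{\tau(x)})v
-\SA_{\hbox{\sevenrm def}}\bigl(u(X_\bull(x))\bigr)_{\tau(x)}\int_0^{\tau(x)}\langle k,dZ\rangle.
\end{equation*}

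Next I would verify that $M$ is a uniformly integrable martingale on $[0,\tau(x)]$, so that $\E[M_{\tau(x)}]=\E[M_0]=0$. Compactness of $M$ (hence of $u(M)$) bounds $du$ and the curvature tensor $R$ of $\nabla$ along the trajectory, which via the covariant equation~\eqref{eq:GeodTrans} yields moment bounds on $\Theta_{0,\bull}^{-1}$; standard SDE estimates combined with $\tau(x)\in\bigcap_p L^p$ (Remark~\ref{AllLp}) give $L^p$-bounds on $T_xX_\bull$ and on $(X_{s\ast}^{-1}A)_x$; and BDG applied to the hypothesis $\bigl(\int_0^{\tau(x)}|k_s|^2\,ds\bigr)^{1/2}\in L^{1+\varepsilon}$ controls both the stochastic integral $\int_0^{\tau(x)}\langle k,dZ\rangle$ and the quadratic variation of $\SA_{\hbox{\sevenrm def}}(u(X_\bull(x)))$. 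A H\"older argument modelled on \cite{TH 97}, Theorem~2.4, assembles these ingredients into uniform integrability.

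To conclude, I invoke the non-linear result recalled in \cite{A-T 97}: the process $s\mapsto\Theta_{0,s}^{-1}(du)_{X_s(x)}(T_xX_s)$ is itself a local martingale, being the $\Theta^{-1}$-transported derivative of the $\nabla$-martingale family $u\circ X_\bull$ at $x$. The moment bounds above make it uniformly integrable on $[0,\tau(x)]$, so optional sampling together with $\Theta_{0,0}=\id$ and $T_xX_0=\id$ gives
\begin{equation*}
\E\bigl[\Theta_{0,\tau(x)}^{-1}(du)_{X_{\tau(x)}(x)}(T_xX_{\tau(x)})v\bigr]=(du)_xv.
\end{equation*}
Subtracting this identity from $\E[M_{\tau(x)}]=0$ yields~\eqref{eq:dHarmNonLin}. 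The principal obstacle throughout is the uniform integrability step: curvature enters $\Theta_{0,\bull}^{-1}$ multiplicatively through the quadratic variation of $u(X_\bull(x))$, so its moments must be controlled jointly with those of $T_xX$ and of the Girsanov-type stochastic integral. Compactness of $u(M)$ is precisely what makes this joint control possible.
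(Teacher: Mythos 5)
Your proof is correct and follows exactly the route the paper intends: combine the nonlinear local martingale of Theorem~\ref{LocMartCurvedTarget} with the observation from \cite{A-T 97} that $\Theta_{0,\bull}^{-1}(du)_{X_\bull(x)}X_{\bull\ast}$ is itself a local martingale, verify uniform integrability as in Theorem~2.4 of \cite{TH 97}, and take expectations at $\tau(x)$. The paper itself gives no detailed proof, merely remarking that the extension of Theorem~\ref{ThmTwo} by means of \eqref{eq:QuasiDerCurvedTarget} is straightforward; your argument supplies precisely the expected details.
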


Note that if $a$ is a predictable process taking values in
$T_xM\otimes(\R^r)^\ast$, as in Section~\ref{Sect4}, then
\begin{equation}
  \label{eq:QuasiDerCurvedTarget1}
  \Theta_{0,\bull}^{-1}\,dF(\,\nbull\,,X_\tbull(x))\,(T_xX_\tbull)
  \int_0^\bull (X_{s\ast}^{-1}A)_x^\mstrut a_s\,ds
  -\SA_{\hbox{\sevenrm def}}\bigl(F(\,\nbull\,,X_\tbull(x))\bigr)
  \int_0^\bull a_r^\ast \,dZ_r
\end{equation}
gives a local martingale in $T_xM\otimes T_{F(0,x)}N$.  In particular, setting
\begin{equation}
  \label{eq:StandChoice}
  a_s=(X_{s\ast}^{-1}A)_x^\ast\,1_{\{s\leq\tau\}},  
\end{equation}
where $\tau$ may be any predictable stopping time, we see that
\begin{equation}
  \label{eq:LocMart1}
  n_s=\Theta_{0,s}^{-1}\,dF(s,\nbull\,)_{X_s(x)}\,X_{s\ast}\,C_{s\wedge\tau}(x)
  -\SA_{\hbox{\sevenrm def}}\bigl(F(\,\nbull\,,X_\tbull(x))\bigr)_s
  \int_0^{s\wedge\tau}(X_{r\ast}^{-1}A)_x\,dZ_r
\end{equation}
is a local martingale.  Let
\begin{equation}
  \label{eq:YandDistY}
  Y=\SA_{\hbox{\sevenrm def}}\bigl(F(\,\nbull\,,X_\tbull(x))\bigr)\quad
  \text{and}\quad 
  Y^\lambda=\SA_{\hbox{\sevenrm def}}\bigl(F(\,\nbull\,,X^\lambda_\tbull(x))\bigr).
\end{equation}
for variations $X^\lambda(x)$ of $X(x)$, as in Section~\ref{Sect3}, and recall that,
again with the choice~\eqref{eq:StandChoice},
\begin{equation}
  \label{eq:Jacobi}
  J_s=\partial_\lambda^\mstrut\bigl\vert_{\lambda=0}
  F\bigl(s,X^\lambda_s(x)\bigr)=
  dF(s,\nbull\,)_{X_s(x)}^\mstrut\,X_{s\ast}\,C_{s\wedge\tau}(x).
\end{equation}
By definition, $J_\bull w$ is a vector field on $N$ along the martingale
$F(\,\nbull\,,X_\tbull(x))$ for each $w\in T_x^\ast M$.  Imitating the
strategy of Section~\ref{Sect7}, the idea is to differentiate
$Y^\lambda_\tbull\,G^\lambda_\tbull$ with respect to $\lambda$.

\begin{lemma}
  \label{VertParVariation}
  Keeping the notations as above, we have
  \begin{equation}
    \label{eq:DiffAntiDev}
    \hbox{\rm vert}\left[\partial_\lambda^\mstrut\bigl\vert_{\lambda=0}Y^\lambda\right]  
    =\Theta_{0,\bull}^{-1}J-J_0
    +\int_0^\bull\,\Theta_{0,s}^{-1}\,(\nabla\Theta_{0,s}^\mstrut)\,dY_s
  \end{equation}
  where $\nabla\Theta_{0,\bull}^\mstrut : T_{F(0,x)}N\to
  T_{F(\,\nbull\,,X_\bull(x))}N$ is defined by
  \begin{equation}
    (\nabla\Theta_{0,\bull}^\mstrut) u 
    =v_J^{-1}\bigl(\bigl(\Theta_{0,\bull}^{c\mstrut}\,h_{J_0}(u)\bigl){}^{\rm vert}\bigr).\end{equation}
  In particular, 
  $\hbox{\rm vert}\left[\partial_\lambda^\mstrut\bigl\vert_{\lambda=0}Y^\lambda\right]$
  and 
  $\Theta_{0,\bull}^{-1}J-J_0$
  differ only by a local martingale.
  Here~$\Theta_{0,\bull}^{c\mstrut}$ denotes the geodesic transport on $TN$ 
  along $J$ with respect to the complete lift $\nabla^c$ of the connection $\nabla$.
\end{lemma}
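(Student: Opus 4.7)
The natural arena for this calculation is the tangent bundle $TN$ equipped with the complete lift connection $\nabla^c$, as already signaled by the presence of $\Theta_{0,\bull}^c$ and the horizontal lift $h_{J_0}$ in the statement.  The variation process $J_s = \partial_\lambda\bigl\vert_{\lambda=0} F(s, X^\lambda_s(x))$ is a smooth curve in $TN$ over $F(\bull, X_\bull(x))$, and by the derivative formalism for families of $\nabla$-martingales developed in \cite{A-T 97}, $J$ is itself a $\nabla^c$-martingale in $TN$.  This lift turns the question about $\partial_\lambda Y^\lambda$ into one about parallel transport and anti-development in $TN$.

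My plan would be to start from the defining identity $\delta Y^\lambda_s = \Theta^\lambda_{0,s}{}^{-1}\,\delta F^\lambda_s$ with $Y^\lambda_0 = 0$ and differentiate in $\lambda$ at $\lambda = 0$.  Since each $Y^\lambda$ lives in the fixed vector space $T_{F(0,x)}N$, the derivative is automatically a vertical object in $TN$ over $Y$, which accounts for the appearance of $\mathrm{vert}[\,\cdot\,]$ on the left-hand side of \eqref{eq:DiffAntiDev}.  The $\lambda$-derivative splits into two contributions: one from the variation of the integrand $\delta F^\lambda$, producing $J$ via Stratonovich integration by parts, and one from the variation of $\Theta^\lambda_{0,s}$ itself.

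The key intermediate step is to identify $\partial_\lambda\bigl\vert_{\lambda=0}\Theta^\lambda_{0,s}$ with the operator $\nabla\Theta_{0,s}$ defined in the statement.  Differentiating the defining covariant ODE \eqref{eq:GeodTrans} for $\Theta^\lambda$ in $\lambda$ yields a Jacobi-type equation along $J$ in $TN$; its right-hand side is precisely the curvature expression that one obtains by horizontally lifting $u\in T_{F_0}N$ to $h_{J_0}(u) \in H_{J_0}TN$, transporting it along $J$ with $\Theta^c_{0,\bull}$ using $\nabla^c$, and projecting to the vertical fiber via $v_J^{-1}$.  This identification is the structural content of the lemma, and reflects the fact that $\nabla^c$ is precisely the connection on $TN$ that intertwines horizontal-to-vertical parallel transport in $TN$ with the curvature-corrected geodesic transport $\Theta$ on $N$.

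Assembling these pieces, the Stratonovich integration by parts converts the $\delta F$-term into the boundary term $\Theta^{-1}_{0,\bull}J - J_0$, while the variation of $\Theta^\lambda$ contributes the integral $\int_0^\bull \Theta^{-1}_{0,s}(\nabla\Theta_{0,s})\,dY_s$, yielding \eqref{eq:DiffAntiDev}.  The final assertion is then immediate: since $F(\bull,X_\bull(x))$ is a $\nabla$-martingale and $Y = \SA_{\mathrm{def}}(F(\bull,X_\bull(x)))$ is its deformed anti-development, $Y$ is a local martingale, so the stochastic integral against $dY$ is itself a local martingale, and $\mathrm{vert}[\partial_\lambda\bigl\vert_{\lambda=0}Y^\lambda]$ differs from $\Theta^{-1}_{0,\bull}J - J_0$ only by a local martingale.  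I expect the main obstacle to be executing cleanly the curvature bookkeeping in the third step: identifying the $\nabla^c$ Jacobi equation obtained by differentiating \eqref{eq:GeodTrans} with the precise horizontal-to-vertical expression defining $\nabla\Theta$ is the kind of computation where signs and the ordering of curvature arguments are easy to botch.
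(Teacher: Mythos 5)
The paper gives no proof of Lemma~\ref{VertParVariation}; immediately after the statement it says explicitly ``We are not going to prove Lemma~\ref{VertParVariation} here.'' So there is nothing in the paper to compare your argument against, and your proposal must be judged on its own terms.

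Your sketch does pick out the correct framework: work in $TN$ with the complete lift connection $\nabla^c$, differentiate the defining relation $\delta Y^\lambda = (\Theta^\lambda_{0,\bull})^{-1}\delta F^\lambda$, split the $\lambda$-derivative into a variation-of-integrand piece and a variation-of-transport piece, and identify the latter with the $\nabla\Theta$ operator by differentiating the covariant ODE \eqref{eq:GeodTrans}. All of this is consistent with the machinery of \cite{A-T 97} and with the notation ($\Theta^c_{0,\bull}$, $h_{J_0}$, $v_J$) appearing in the statement. However, as you yourself flag, the crucial third step --- identifying $\partial_\lambda\vert_{\lambda=0}\Theta^\lambda_{0,s}$ with $\nabla\Theta_{0,s}$ via the $\nabla^c$-Jacobi equation along $J$ --- is left entirely to the reader; until the curvature bookkeeping is actually executed (including the It\^o/quadratic-variation corrections hidden in the Stratonovich differentials, which you do not mention and which will intertwine the two pieces of the split rather than keeping them cleanly separated), this is a plan of attack, not a proof.

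One assertion in your write-up is likely false and worth correcting even at the level of a sketch. You state that ``$J$ is itself a $\nabla^c$-martingale in $TN$,'' appealing to \cite{A-T 97}. That result concerns derivatives of families of $\nabla$-martingales with respect to \emph{initial conditions}; here $J_s = dF(s,\nbull)_{X_s(x)}X_{s\ast}C_{s\wedge\tau}(x)$ arises from a \emph{noise perturbation} of Girsanov type, and carries the extra random factor $C_{s\wedge\tau}(x)$. Indeed Eq.~\eqref{eq:LocMart1} shows precisely that $\Theta_{0,\bull}^{-1}J$ is \emph{not} a local martingale --- only the combination $n_s$ is --- which is incompatible with $J$ being a $\nabla^c$-martingale. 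Fortunately the lemma only needs the geodesic transport $\Theta^c_{0,\bull}$ along $J$, which is defined for any continuous $TN$-valued semimartingale, so the misstatement does not kill the approach; but it should be removed, and the ``in particular'' conclusion of the lemma should be obtained instead from the fact that $Y=\SA_{\text{\rm def}}(F(\nbull,X_\tbull(x)))$ is a local martingale (so the $dY$-integral is one), exactly as you argue in your final paragraph.
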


We are not going to prove Lemma~\ref{VertParVariation} here.  We just remark
that, again with the choice \eqref{eq:StandChoice} for the process $a$, we end
up with the following local martingale:
\begin{equation}
  \label{eq:VertMart}
  \begin{split}
    m&:=\hbox{vert}\left[\partial_\lambda^\mstrut\bigl\vert_{\lambda=0}
      (Y^\lambda G^\lambda)\right]\\
    &\phantom{:}=\Theta_{0,\bull}^{-1}J-J_0
    +\int_0^\bull\,\Theta_{0,s}^{-1}\nabla\Theta_{0,s}^\mstrut\,dY_s
    -Y\int_0^{\bull\wedge\tau}(X_{s\ast}^{-1}A)_x\,dZ_s.
  \end{split}
\end{equation}
Then a procedure along the lines of Section~\ref{Sect7} leads to a formula for
$dF(0,\,\nbull\,)_xv$ which is analogous to the linear case, but with an
additional term of the type
\begin{equation}
  \label{eq:Difference}
  \E\left[\Bigl(\int_0^\sigma
    \Theta_{0,s}^{-1}\nab{J_s}\Theta_{0,s}^\mstrut\,dY_s\Bigr)\,
    C^{-1}_\sigma(x)\,v\right]
\end{equation}
for some stopping time $\sigma$.  At the moment, it seems unclear whether it
is possible to avoid this extra term.

\section{Concluding Remarks}\label{Sect9}\setcounter{equation}0\noindent
1. The presented differentiation formulas are not intrinsic: they involve the
derivative flow which depends on the particular SDE and not just on the
generator.  It is possible to make the formulas more intrinsic by using the
framework of Elworthy, Le Jan, Li \cite{E-L-L 97}, \cite{E-LJ-L 98} on
geometry of SDEs (e.g., filtering out redundant noise and working with
connections induced by the SDE).

2. In this paper we exploited perturbations of the driving Brownian motion and
a change of measure as method for constituting variational formulas.  There
are of course other ways of performing perturbations leading to local
martingales which are related to integration by parts formulas.  For instance,
one observes that the local martingale property of $F(\,\nbull\,,X_\tbull(x))$
is preserved under
\begin{itemize}
\item[(i)] a change of measure via Girsanov's theorem,
\item[(ii)] a change of time,
\item[(iii)] rotations of the BM $Z$.
\end{itemize}
In particular, (iii) seems to be promising in the hypoelliptic context since
it leads to contributions in the direction of the bracket $[A_i,A_j]$.  So far
however, it is unclear to us how to relate such variations to regularity
results under hypoellipticity conditions.

%
%

\end{document}